\newsavebox{\@brx}
\newcommand{\llangle}[1][]{\savebox{\@brx}{\(\m@th{#1\langle}\)}%
  \mathopen{\copy\@brx\kern-0.5\wd\@brx\usebox{\@brx}}}
\newcommand{\rrangle}[1][]{\savebox{\@brx}{\(\m@th{#1\rangle}\)}%
  \mathclose{\copy\@brx\kern-0.5\wd\@brx\usebox{\@brx}}}
\newtheorem{theorem}{Theorem}[section]
\newtheorem{lemma}[theorem]{Lemma}
\newtheorem{proposition}[theorem]{Proposition}
\theoremstyle{definition}
\newtheorem{definition}[theorem]{Definition}
\newtheorem{example}[theorem]{Example}
\newtheorem{remark}[theorem]{Remark}
\numberwithin{equation}{subsection}
\newtheorem*{ack}{Acknowledgement}
\newcommand{\Aut}{\operatorname{Aut}}
\newcommand{\Stab}{\operatorname{Stab}}
\newcommand{\Inn}{\operatorname{Inn}}
\newcommand{\Out}{\operatorname{Out}}
\newcommand{\Homeo}{\operatorname{Homeo}}
\newcommand{\Mod}{\mathrm{MCG}}
\newcommand{\m}{\mathfrak{t}}
\newcommand{\p}{\hat{\mathfrak{t}}}
\begin{document}

\title{Twisted Rokhlin property for mapping class groups}

\author{Pravin Kumar}
\email{pravin444enaj@gmail.com}
\address{Department of Mathematical Sciences, Indian Institute of Science Education and Research (IISER) Mohali, Sector 81,  S. A. S. Nagar, P. O. Manauli, Punjab 140306, India.}

\author{Apeksha Sanghi}
\email{apekshasanghi93@gmail.com}

\address{Department of Mathematical Sciences, Indian Institute of Science Education and Research (IISER) Mohali, Sector 81,  S. A. S. Nagar, P. O. Manauli, Punjab 140306, India.}
\author{Mahender Singh}
\email{mahender@iisermohali.ac.in}
\address{Department of Mathematical Sciences, Indian Institute of Science Education and Research (IISER) Mohali, Sector 81,  S. A. S. Nagar, P. O. Manauli, Punjab 140306, India.}

\subjclass[2020]{Primary 57K20, 54H11; Secondary 20E36}

\keywords{Big mapping class group; infinite type surface; $R_\infty$-property; Rokhlin property; twisted conjugacy; twisted Rokhlin property}

\begin{abstract}
In this paper, generalising the idea of the Rokhlin property, we explore the concept of the twisted Rokhlin property of topological groups. A topological group is said to exhibit the twisted Rokhlin property if, for each automorphism $\phi$ of the group, there exists a $\phi$-twisted conjugacy class that is dense in the group. We provide a complete classification of connected orientable infinite-type surfaces without boundaries whose mapping class groups possess the twisted Rokhlin property. Additionally, we prove that the mapping class groups of the remaining surfaces do not admit any dense $\phi$-twisted conjugacy class for any automorphism $\phi$. This supplements the recent work of Lanier and Vlamis on the Rokhlin property of big mapping class groups. We also prove that the mapping class group of each connected orientable infinite-type surface without boundary possesses the $R_\infty$-property.

\end{abstract}

\maketitle
% \tableofcontents

\section{Introduction}
A topological group is said to exhibit the Rokhlin property if it contains a dense conjugacy class. From the point of view of dynamics, the Rokhlin property is equivalent to the existence of a dense orbit under the conjugation action of the topological group on itself. This perspective allows us to consider more general actions of the topological group arising from its structure.
\par

Given an automorphism $\phi$ of a group $G$, the map $(g,h) \mapsto gh\phi(g)^{-1}$, defines a left action of $G$ on itself. Two elements $x, y \in G$ are said to be $\phi$-twisted conjugate if they are in the same orbit under this action. The resulting orbits  are referred as {\it $\phi$-twisted conjugacy classes}.  The idea of twisted  conjugacy  arose from the work of Reidemeister \cite{MR1513064}. It has deep connection with Nielsen fixed-point theory \cite{MR1697460}, and appears in  Arthur--Selberg  theory \cite{MR1176101}, algebraic geometry \cite{grothendieck1977}, and representation theory \cite{MR0000998}, to name a few. In recent years, there has been a tremendous amount of work on understanding twisted conjugacy in various classes of groups. See, for example \cite{MR4525669, MR2644279}, for recent works on twisted conjugacy classes in mapping class groups and braid groups.
\par 

In this paper, we combine the concept of the Rokhlin property with that of twisted conjugacy.  Let $G$ be a topological group and $\phi$ an automorphism of the underlying abstract group. We say that $G$ has the {\it $\phi$-twisted Rokhlin property} if $G$ admits a dense $\phi$-twisted conjugacy class. Further, $G$ is said to have the {\it twisted Rokhlin property} if it has the $\phi$-twisted Rokhlin property for each automorphism $\phi$ of the underlying abstract group. Clearly, if a topological group has the twisted Rokhlin property, then it has the Rokhlin property.
\par 

The mapping class group of a connected orientable surface is the group of isotopy classes of orientation-preserving self-homeomorphisms of the surface. These groups inherit a non-trivial quotient topology from the compact-open topology on the group of orientation-preserving self-homeomorphisms of the surface. The aim of this paper is to classify  connected orientable surfaces without boundaries whose mapping class groups have the twisted Rokhlin property. Our approach is built upon the recent work \cite{MR4492497} of Lanier and Vlamis, where they classify connected orientable surfaces without boundaries whose mapping class groups admit  the Rokhlin property (see also \cite{MR4477213} for an independent such classification).
\par

The paper is organised as follows. In Section \ref{section 2}, we recall basic results on infinite type surfaces, mapping class groups and curve graphs, that we shall need in the latter sections. In Section \ref{section 3}, we introduce the twisted Rokhlin property of topological groups and make some basic observations. We prove that  if $\phi, \psi$ are two automorphisms of a topological group $G$ such that $\psi \in \Inn(G)\phi$, then $G$ has the $\phi$-twisted Rokhlin property if and only if it has the $\psi$-twisted Rokhlin property (Proposition \ref{lem:twistrokh}). In particular, this implies that the twisted Rokhlin property for an inner automorphism is equivalent to the Rokhlin property of the group. For Polish topological groups, we prove that the $\phi$-twisted Rokhlin property is equivalent to the property that, given  two non-empty open subsets $U$ and $V$ of $G$, there exists $g \in G$ such that $U \cap g V \phi(g)^{-1}\neq \emptyset$ (Proposition \ref{phi-TJEP}). Section \ref{trp of bmcg} contains our main results on the twisted Rokhlin property of big mapping class groups. By employing the approach outlined in the following flowchart, we establish Theorem \ref{theorem unique max end},  Proposition \ref{prop:2end}, Theorem \ref{thm:cantor} and Theorem \ref{thm:non-displaceable}.
$$\xymatrix{
	&*\txt{Infinite type surface} \ar[d] \ar[rd]  && \\
	& *\txt{Every compact subsurface\\is displaceable} \ar[dl] \ar[d] \ar[dr]& *\txt{There exists a compact \\non-displaceable subsurface} &\\
	*\txt{Unique maximal end}  & *\txt{Two maximal ends}  & *\txt{Space of maximal ends\\ is a Cantor space} &
}$$
\medskip

As a consequence, we deduce the following classification result.

\begin{theorem}
The mapping class group $\Mod(S)$ of a connected and orientable surface $S$ without boundary has the $\phi$-twisted Rokhlin property for some automorphism $\phi$ of $\Mod(S)$  if and only if the surface is either the $2$-sphere or satisfy the property that every compact subsurface of $S$ is displaceable and $S$ has a unique maximal end.
\end{theorem}

Along the way, we also investigate the $R_\infty$-property of mapping class groups of surfaces and prove the following result.

\begin{theorem}\label{theorem r-infinity property}
Let  $S$ be a connected orientable infinite-type surface without boundary. Then $\Mod(S)$ possesses the $R_\infty$-property.
\end{theorem}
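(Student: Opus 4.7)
The plan is to prove that, for every automorphism $\phi$ of $\Mod(S)$, the Reidemeister number $R(\phi)$ is infinite. A reduction parallel to the argument behind Proposition~\ref{lem:twistrokh} yields $R(\phi)=R(\psi)$ whenever $\phi$ and $\psi$ lie in the same coset of $\Inn(\Mod(S))$ in $\Aut(\Mod(S))$. Thus it suffices to verify $R(\phi)=\infty$ on a set of representatives for $\Out(\Mod(S))$.

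To control these representatives, I would invoke the rigidity theorem (Bavard--Dowdall--Rafi, together with related work of Hern\'andez--Morales--Valdez) which states that every automorphism of the mapping class group of an infinite-type orientable surface without boundary is induced by a self-homeomorphism of $S$. Consequently $\Out(\Mod(S))$ has order at most two, and its non-identity class, when present, is represented by the automorphism $\phi_\tau$ of conjugation by an orientation-reversing self-homeomorphism $\tau$ of $S$.

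For the inner case, $R(\phi)=R(\id)$ is simply the number of conjugacy classes of $\Mod(S)$. Since $S$ is of infinite type, it carries infinitely many $\Mod(S)$-orbits of isotopy classes of simple closed curves, distinguished for instance by the topological type of their complementary components and by the end-neighborhoods accumulating on them. Dehn twists about curves from distinct orbits are mutually non-conjugate, so $R(\id)=\infty$.

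For the coset of $\phi_\tau$, observe that $\phi_\tau(T_c)=T_{\tau(c)}^{-1}$ for every simple closed curve $c$. I would construct an infinite family $\{c_k\}$ of simple closed curves whose unordered pairs $\{[c_k],[\tau(c_k)]\}$ of $\Mod(S)$-orbits are pairwise distinct, by selecting each $c_k$ to isolate an end-neighborhood whose topological type is fixed (rather than swapped) by $\tau$; the twists $T_{c_k}$ then lie in pairwise distinct $\phi_\tau$-twisted conjugacy classes, giving $R(\phi_\tau)=\infty$. The main obstacle is the case where the maximal-end space of $S$ is a Cantor space, since the abundant symmetries of $\tau$ can collapse many orbit types. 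I would handle this by subdividing along the three branches of the flowchart in Section~\ref{trp of bmcg}, and within each branch locating curves whose complements expose $\tau$-invariant features of the end-structure (such as ends of locally distinct genus or distinct planar/non-planar character).
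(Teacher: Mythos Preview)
Your reduction to coset representatives of $\Out(\Mod(S))$ is correct and mirrors the paper. The gap lies in how you produce infinitely many classes in each case.

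For the inner case you assert that an infinite-type surface always carries infinitely many $\Mod(S)$-orbits of essential simple closed curves. This is false. Take $S=\mathbb{S}^2\setminus C$ with $C$ a Cantor set. Every essential curve separates $C$ into two non-empty clopen pieces, each of which is again a Cantor set; the two complementary subsurfaces are therefore homeomorphic copies of a disk minus a Cantor set, and by the change-of-coordinates principle for infinite-type surfaces all such curves lie in a \emph{single} $\Mod(S)$-orbit. So the invariant you propose (topological type of the complement, end-neighbourhoods) does not distinguish anything here. The paper sidesteps this entirely: it fixes one curve $c$ and observes that $T_c^m$ is conjugate to $T_c^n$ only if $m=n$, since $gT_c^m g^{-1}=T_{g(c)}^{m}$ and Dehn twists along isotopic curves to equal powers are equal only when the powers agree (Lemma~\ref{prop:multicurve}). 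This gives $R(\id)=\infty$ for every $S$ admitting an essential curve, with no case analysis.

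The same obstruction undermines your outer case. For the Cantor tree there is only one orbit of curves, so the unordered pair $\{[c_k],[\tau(c_k)]\}$ cannot vary; your proposed refinement via ``$\tau$-invariant features of the end-structure'' has nothing to grab onto when the end space is perfectly homogeneous. The paper instead chooses an orientation-reversing involution $f$ and a curve $c$ with $\{c,f(c)\}$ a multicurve (Proposition~\ref{prop:order2}), then shows the elements $T_c^n f\in\Mod^{\pm}(S)$ are pairwise non-conjugate by squaring: $(T_c^n f)^2=T_c^n T_{f(c)}^{-n}$, and the exponent $n$ is a conjugacy invariant by Lemma~\ref{prop:multicurve}. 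Via Lemma~\ref{lemma:twistedconj1} this yields infinitely many $\hat f$-twisted classes. The key idea you are missing is to vary a \emph{power} rather than a curve type.
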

\medskip

\section{Preliminaries}\label{section 2}
As the subject resides at the intersection of group theory, topology, and geometry, we offer relevant background information to enhance its readibility.

\subsection{Classification of surfaces}
A surface is said to be of {\it finite-type} if its fundamental group  is finitely generated; otherwise it is said  to be of {\it infinite-type}. Throughout the paper, our primary surface $S$ under consideration will be connected, orientable, without boundary and of infinite-type, unless stated otherwise. At some occasions, we shall also need finite-type surfaces, and we write $S_{g, n}^b$ to denote a finite-type surface of genus $g$ with $n$ punctures and $b$ boundary components.

\begin{definition}
An {\it exiting sequence} in a surface $S$ is a sequence $\{U_n\}_{n \ge 1}$ of connected open subsets of $S$ admitting the following properties:
\begin{enumerate}
\item If $m<n$, then $U_n \subset U_m$.
\item For each $n \in \mathbb N$, $U_n$ is not relatively compact.
\item For each $n \in \mathbb N$, $U_n$ has compact boundaries.
\item Each relatively compact subset of $S$ is disjoint from all except finitely many $U_n$'s.
\end{enumerate}
\end{definition}

Two exiting sequences $\{U_n\}_{n \ge 1}$  and $\{V_n\}_{n \ge 1}$  are {\it equivalent} if for every $n \in \mathbb N$,  there exists $m  \in \mathbb N$ such
that $U_m \subset V_n$ and $V_m \subset U_n$. An equivalence class of an exiting sequence is called an {\it end} and we denote the set of all ends by  $\mathcal{E}(S)$. The set $\mathcal{E}(S)$ can be  topologised as follows. Given a subset $U \subset S$ with compact boundary, consider the set 
$$U^*=\big\{[\{U_n\}_{n \ge 1}] \in \mathcal{E}(S) ~\mid~\textrm{there exists}~n \in \mathbb{Z}~\textrm{such that}~U_n \subset U \big\}$$
Then the desired topology on $\mathcal{E}(S)$ is the topology generated by the basis $$ \{ U^* ~\mid~ U \subset S~~ \text{is open with compact boundary}\}.$$ 
 With this topology, the space of ends of a surface is compact, totally disconnected, second countable and Hausdorff. Consequently, the space of ends is homeomorphic to a closed subset of the Cantor set.  If $U \subset S$ is an open subset with compact boundary and $ e \in  U^*$, then we refer to $U$ as a neighborhood of $e$.  Further, every homeomorphism $h: S\to S$ induces a homeomorphism  $\mathcal{E}(S)\to \mathcal{E}(S)$ given by $$[\{U_n \}_{n \ge 1}] \mapsto [\{h(U_n)\}_{n \ge 1}].$$
\par

An end is called {\it planar} if it has a neighbourhood that can be embedded in the plane. Otherwise, it is called {\it non-planar}. Let $\mathcal{E}_{np}(S)$ be the subspace of $\mathcal{E}(S)$ consisting of non-planar ends. Clearly,  $\mathcal{E}_{np}(S)$ is a closed set in $\mathcal{E}(S)$. The following theorem gives a classification of surfaces that we shall need \cite[Theorem 1]{MR0143186}.

\begin{theorem}
Let $S_1$ and $S_2$ be two surfaces. Let $g_1, g_2$ and $b_1, b_2$ represent the genus and the number of boundary components of $S_1, S_2$, respectively. Then $S_1$ is homeomorphic to $S_2$ if and only if $g_1 = g_2$, $b_1 = b_2$, and there exists a homeomorphism from $\mathcal{E}(S_1)$ to $\mathcal{E}(S_2)$ that restricts to a homeomorphism from $\mathcal{E}_{np}(S_1)$ to $\mathcal{E}_{np}(S_2)$.
\end{theorem}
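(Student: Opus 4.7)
The forward implication is straightforward and I would dispose of it first. Any homeomorphism $h \colon S_1 \to S_2$ clearly preserves genus and number of boundary components, and it induces the map $h_* \colon \mathcal{E}(S_1) \to \mathcal{E}(S_2)$ described in the paragraph preceding the theorem. Because $h^{-1}$ induces the inverse, $h_*$ is a homeomorphism, and because planarity of an end is a purely topological property of a neighbourhood basis, $h_*$ sends $\mathcal{E}_{np}(S_1)$ onto $\mathcal{E}_{np}(S_2)$.

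For the converse, the plan is to construct the desired homeomorphism as a limit of homeomorphisms between compatible compact exhaustions, following the classical scheme of Kerékjártó--Richards. First, I would establish the exhaustion lemma: each $S_i$ admits an exhaustion by compact connected subsurfaces $K_1^{(i)} \subset K_2^{(i)} \subset \cdots$ whose boundaries are finite disjoint unions of simple closed curves and whose complementary components are connected open subsurfaces with one boundary circle. Each such complementary component $C$ corresponds to a clopen subset $C^* \subset \mathcal{E}(S_i)$, and the partitions of the end space obtained at successive stages form a refining sequence whose mesh tends to zero in the topology of $\mathcal{E}(S_i)$. Standard compactness of the end space together with the existence of arbitrarily small clopen neighbourhoods of any end make this step routine.

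The heart of the argument is the back-and-forth construction. Using the given homeomorphism $\psi \colon \mathcal{E}(S_1) \to \mathcal{E}(S_2)$ restricting to $\mathcal{E}_{np}(S_1) \to \mathcal{E}_{np}(S_2)$, one inductively refines both exhaustions so that at stage $n$ the complementary components of $K_n^{(1)}$ in $S_1$ are matched, via $\psi$, with those of $K_n^{(2)}$ in $S_2$, with matched components sharing: the same genus (finite or infinite, detected by the presence of non-planar ends in the matched clopen set), the same finite collection of planar ends contained, and boundary circles in bijection. Granting this, the classification of compact orientable surfaces with boundary (determined by genus and number of boundary components) yields a homeomorphism $h_n \colon K_n^{(1)} \to K_n^{(2)}$. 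One then extends $h_n$ to $h_{n+1}$ on the cobordism $K_{n+1}^{(i)} \setminus \operatorname{int}(K_n^{(i)})$, using the fact that a compact orientable surface with two specified boundary families and specified genus is unique up to boundary-preserving homeomorphism. The sequence $(h_n)$ then converges to a homeomorphism $h \colon S_1 \to S_2$ whose induced map on ends is $\psi$.

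The main obstacle is the matching step: one must arrange the clopen partitions on both sides to correspond under $\psi$ \emph{and simultaneously be realisable by a compact subsurface} with matching genus on matched components. The delicate point is that infinite genus can accumulate on a non-planar end and that planar ends can accumulate on non-planar ends, so the partitions have to be chosen adaptively; this is exactly where the hypothesis that $\psi$ restricts to a homeomorphism $\mathcal{E}_{np}(S_1) \to \mathcal{E}_{np}(S_2)$ (rather than only a homeomorphism of the full end spaces) is needed to keep the genus-carrying components aligned on both sides. This is handled by a Cantor-style back-and-forth: at odd stages refine the $S_1$-side to dominate the current $S_2$-partition and vice versa, using the exhaustion lemma to realise each refinement by a genuine compact subsurface.
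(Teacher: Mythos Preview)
The paper does not prove this statement; it is quoted verbatim as a known classification result from Richards \cite[Theorem 1]{MR0143186} and used without argument. There is therefore no in-paper proof to compare your proposal against.

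For what it is worth, your outline is exactly the classical Ker\'ekj\'art\'o--Richards scheme and is sound as a sketch. Two minor corrections: requiring every complementary component of $K_n^{(i)}$ to have a \emph{single} boundary circle is stronger than what the construction actually needs (and can fail if you also want connectedness of $K_n^{(i)}$ together with control of the clopen partition); it suffices that each complementary piece is unbounded and connected. Also, you never invoke the hypothesis $b_1=b_2$: the boundary circles of $S_i$ must be absorbed into some $K_n^{(i)}$, and matching the compact pieces at that stage uses equality of the boundary counts. Otherwise the back-and-forth you describe, with $\psi\restriction\mathcal{E}_{np}$ controlling where genus is allowed to sit, is the right mechanism.
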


Without loss of generality, we shall assume throughout that the surface $S$ is symmetric about the $xy$-plane when embedded in $\mathbb{R}^3$.  Since $\mathcal{E}(S)$ is homeomorphic to a closed subset of the Cantor set, it can be embedded on the great circle of intersection of the $xy$-plane and the 2-sphere $\mathbb{S}^2$. Now, take a sequence of disjoint closed disks $D_j$ on $\mathbb{S}^2 \setminus \mathcal{E}(S)$ which are symmetric about the $xy$-plane such that the set of accumulation points of this sequence is precisely $\mathcal{E}_{np}(S)$. We remove the interiors of the disks $D_j$ and suitably identify in pairs the boundary circles by orientation-reversing homeomorphisms such that the desired resulting surface is again symmetric about the $xy$-plane. See \cite[Theorem 3]{MR0143186} for the general construction. For instance, Figure \ref{fig:symsurface} illustrates  the construction of Loch Ness monster surface, where the infinite sequence of disjoint closed disks $\{D_i\}_{i\in \mathbb N}$ is chosen such that the radii of the disks decreases as it approaches a point of $\mathcal{E}(S)$.
\begin{figure}[H]
	\labellist
	\tiny
	\pinlabel $D_1$ at 80, 230
	\pinlabel $D_2$ at 190, 228
	\pinlabel $D_3$ at 100, 228
	\pinlabel $D_4$ at 168, 228
	\pinlabel $D_5$ at 125, 228
	\pinlabel $D_6$ at 148, 228
\pinlabel $x$ at 305, 118
	\pinlabel $y$ at 140, 330
	\endlabellist
	\centering
	\includegraphics[scale=0.57]{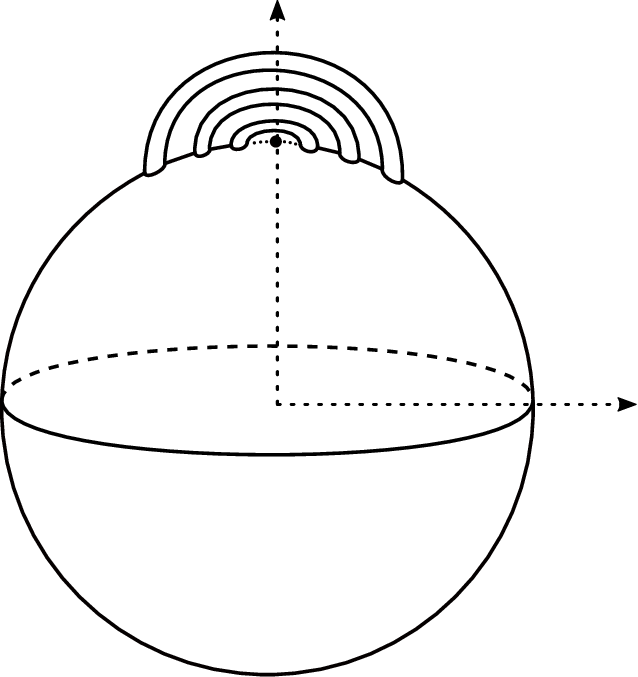}
	\caption{Realization of the Loch Ness monster surface.}
	\label{fig:symsurface}
\end{figure}

For a connected orientable surface $S$ (not necessarility of infinite-type), let $\operatorname{Homeo}^{+}(S)$ denote the group of orientation-preserving self-homeomorphisms of $S$. We define a binary relation $\preceq$ on the space of ends $\mathcal{E}(S)$ of $S$. For $x, y \in \mathcal{E}(S)$, we say that $y \preceq x$ if for every open neighbourhood $\mathcal{U}$ of $x$, there is an open neighbourhood $\mathcal{V}$ of $y$ and a homeomorphism $h \in \operatorname{Homeo}^{+}(S)$ such that $h(\mathcal{V}) \subseteq \mathcal{U}$.  An end $\mu$ is called {\it maximal} if, for any end $e$ satisfying $\mu \preceq e$, we must  have $e \preceq \mu$. Let $\mathcal{M}(S)$ denote the set of all maximal ends of $S$. Note that for every end $e \in \mathcal{E}(S)$, there exists a maximal end  $\mu \in \mathcal{M}(S)$ such that $e \preceq \mu$ \cite[Proposition 4.7]{MannRafi}. We say that two ends $e_1$ and $e_2$ are comparable if either $e_1 \preceq e_2$ or $e_2 \preceq e_1$. The space $\mathcal{E}(S)$ is said to be {\it self-similar} if  given a decomposition $\mathcal{E}(S) = \mathcal{E}_1\sqcup\cdots\sqcup\mathcal{E}_n$ into a finite disjoint union of clopen subsets, there exists an index $i \in \{1,\ldots, n\}$ such that $\mathcal{E}_i$ contains an open homeomorphic copy of $\mathcal{E}(S)$. The idea of  maximality and self-similarity for end spaces is due to \cite{MannRafi}.
\par

A subsurface $\Sigma$ of  $S$ is called {\it displaceable} if there exists a $h\in \operatorname{Homeo}^{+}(S)$ such that $\Sigma \cap h(\Sigma)=\emptyset$. We shall use the following result, which follows from \cite[Theorem 6.1]{MR4322618} and \cite{MannRafi}.

\begin{theorem}\label{compact subsurface is displaceable}
If $S$ is a surface in which every compact subsurface is displaceable, then either
\begin{enumerate}
\item $\mathcal{M}(S)$ has exactly one element,
\item $\mathcal{M}(S)$  has exactly two elements, or
\item $\mathcal{M}(S)$ is homeomorphic to a Cantor space in which any two maximal ends are comparable.
\end{enumerate}
Moreover, $\mathcal{M}(S)$ is  self-similar in the cases (1) and (3).
\end{theorem}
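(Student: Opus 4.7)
The plan is to combine two structural inputs: the Mann--Rafi theory of the partial order $\preceq$ on ends together with their self-similarity/telescoping dichotomy, and the structure results of Horbez--Qing--Rafi on surfaces all of whose compact subsurfaces are displaceable. First I would observe that $\mathcal{M}(S)$ sits inside $\mathcal{E}(S)$ as a closed, hence compact, subset that is totally disconnected, Hausdorff, and second countable. Under the displaceability hypothesis, I would show that any two maximal ends are comparable: given $\mu_1, \mu_2 \in \mathcal{M}(S)$ with disjoint neighbourhoods $U_1, U_2$ that were pairwise incomparable in $\preceq$, one could build a compact subsurface separating $U_1$ from $U_2$ whose genus and planar-end type prevent displaceability, contradicting the hypothesis. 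This is exactly the flavour of the argument in Mann--Rafi that controls the order-structure of $\mathcal{M}(S)$.

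With comparability in hand, the trichotomy is a topological dichotomy on the compact totally disconnected second-countable space $\mathcal{M}(S)$, driven by the presence or absence of isolated points. If $\mathcal{M}(S)$ has isolated points, then by the comparability of maximal ends together with the action of $\operatorname{Homeo}^{+}(S)$ (which preserves $\mathcal{M}(S)$ and the partial order), all maximal ends must be isolated, so $\mathcal{M}(S)$ is finite. A further short argument using comparability shows the number of orbits is $1$ or $2$: if there were three non-equivalent maximal ends $\mu_1 \prec \mu_2 \prec \mu_3$ then $\mu_2$ would fail to be maximal, and one rules out the case of three or more equivalent isolated maximal ends using a displaceability argument on a separating compact subsurface. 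This yields cases (1) and (2). If $\mathcal{M}(S)$ has no isolated points, then being compact, Hausdorff, totally disconnected, and second countable, it is homeomorphic to the Cantor set by Brouwer's theorem, giving case (3).

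For the self-similarity clause in cases (1) and (3), I would proceed as follows. Given a finite clopen decomposition $\mathcal{M}(S) = \mathcal{E}_1 \sqcup \cdots \sqcup \mathcal{E}_n$, pull it back to a finite clopen decomposition of $\mathcal{E}(S)$ using the closed inclusion $\mathcal{M}(S) \hookrightarrow \mathcal{E}(S)$. In case (1), there is a unique maximal end $\mu$, so $\mu$ lies in some $\mathcal{E}_i$, and a neighbourhood of $\mu$ in $\mathcal{E}(S)$ is entirely $\preceq \mu$, producing an open homeomorphic copy of $\mathcal{E}(S)$ inside the clopen set supporting $\mathcal{E}_i$, via the Mann--Rafi characterisation of self-similar end spaces. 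In case (3), mutual comparability of maximal ends combined with the homogeneity of the Cantor space forces some $\mathcal{E}_i$ to contain an open copy of the whole space.

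The main obstacle I anticipate is handling case (2) cleanly: one must rule out the possibility of three or more finite, pairwise incomparable maximal ends while retaining displaceability, and simultaneously rule out the ``Cantor plus isolated points'' hybrid. This is precisely where one needs the full strength of the Mann--Rafi classification together with Horbez--Qing--Rafi's Theorem 6.1, as simple order-theoretic arguments alone are not enough --- one must use the surface topology (genus distribution and planar/non-planar end types) to convert incomparability into an obstruction to displaceability. Once this is granted, the rest of the proof is a combination of Brouwer's characterisation of the Cantor set and elementary manipulations of the partial order $\preceq$.
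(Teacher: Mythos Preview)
The paper does not actually prove this theorem: it is stated in the preliminaries and attributed directly to \cite[Theorem 6.1]{MR4322618} (Aougab--Patel--Vlamis) together with \cite{MannRafi} (Mann--Rafi), with no argument given. So there is no ``paper's own proof'' to compare against; the result is imported wholesale from the literature.

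Your proposal is therefore not so much an alternative proof as an attempt to unpack what those references do. The broad shape --- Mann--Rafi's partial order and self-similarity theory combined with a structure theorem for surfaces with all compact subsurfaces displaceable --- is correct, but note two points. First, you cite ``Horbez--Qing--Rafi's Theorem 6.1'' for the displaceability input, whereas the paper cites Aougab--Patel--Vlamis for this; the relevant structural result lives there. Second, your sketch of why $\mathcal{M}(S)$ cannot have three or more isolated points is genuinely incomplete: comparability alone does not rule out three mutually equivalent isolated maximal ends, and you acknowledge this yourself as ``the main obstacle''. The actual argument in the cited works uses the specific surface-topological input (how maximal ends sit relative to genus and to each other) rather than pure order theory, so your sketch is honest about where the real work is but does not supply it. Since the paper treats this as a black box, that is acceptable for the paper's purposes, but your write-up should make clear that you are invoking, not reproving, the cited theorems.
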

\medskip

\subsection{Mapping class groups of surfaces}

Recall that, the \textit{mapping class group}  $\Mod(S)$ of a connected orientable surface $S$ (not necessarily of infinite-type) is the group of isotopy classes of orientation-preserving self-homeomorphisms of $S$, which preserves the boundary of $S$ point-wise and permutes the punctures, if any. The mapping class group of an infinite-type surface is also referred as the {\it big mapping class group}. 
\par 

Let $\operatorname{Homeo}^{+}(S, \partial(S))$ be the group of orientation-preserving self-homeomorphisms of $S$ fixing the boundary $\partial(S)$ point-wise,
and equipped with the compact-open topology. We equip $\Mod(S)$ with the corresponding quotient topology inherited from $\operatorname{Homeo}^{+}(S,  \partial(S))$. Note that the mapping class group of a surface with compact boundary is discrete if and only if the surface is of finite-type. On the other hand, big mapping class groups have uncountably many elements and inherit a non-discrete topology from $\operatorname{Homeo}^{+}(S, \partial(S))$. The {\it extended mapping class group} of $S$ is the group $\Mod^{\pm}(S)$ of isotopy classes of, possibly orientation-reversing, self-homeomorphisms of $S$. We refer the reader to \cite{MR4264585} for a survey on both topological and algebraic aspects of big mapping class groups.
\par

For infinite-type surfaces without boundary, the outer automorphism group of the mapping class group is known due to \cite[Theorem 1.1]{MR4098634}. 

\begin{proposition}\label{outer auto mcg}
If $S$ is a connected orientable infinite-type surface without boundary, then 
$$\Out\left(\Mod^{\pm}(S)\right)=1 \quad \textrm{and} \quad \Out(\Mod(S)) \cong \mathbb{Z}_2.$$
\end{proposition}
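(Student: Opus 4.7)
The plan is to follow the Ivanov-style strategy, adapted to the big mapping class group setting as in the cited work. The key intermediate object is the curve graph $\mathcal{C}(S)$, whose vertices are isotopy classes of essential simple closed curves and whose edges connect pairs admitting disjoint representatives. The extended mapping class group $\Mod^{\pm}(S)$ acts on $\mathcal{C}(S)$ by simplicial automorphisms, and the technical heart of the argument is an infinite-type analogue of Ivanov's theorem asserting that the natural map $\Mod^{\pm}(S) \to \Aut(\mathcal{C}(S))$ is an isomorphism. Establishing this rigidity is the foundational input on which the rest of the proposal rests.

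Granted the curve graph rigidity, the next step is to upgrade an algebraic automorphism $\phi \in \Aut(\Mod(S))$ to a simplicial automorphism of $\mathcal{C}(S)$. For this, I would give an algebraic characterization of Dehn twists about simple closed curves, for instance through commutation relations, root structure, or centralizer behaviour, so that $\phi$ must permute the set of all Dehn twists. Recording the underlying curve of each twist then yields a simplicial self-map of $\mathcal{C}(S)$, and verifying that it is adjacency-preserving and bijective produces an element of $\Aut(\mathcal{C}(S))$. Composing with the inverse of the curve-graph rigidity isomorphism gives a homomorphism $\Aut(\Mod(S)) \to \Mod^{\pm}(S)$; checking that composing this with the natural conjugation action of $\Mod^{\pm}(S)$ on $\Mod(S)$ is the identity yields $\Aut(\Mod(S)) \cong \Mod^{\pm}(S)$. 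The analogous argument, carried out inside $\Mod^{\pm}(S)$, gives $\Aut(\Mod^{\pm}(S)) \cong \Mod^{\pm}(S)$.

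To pass from $\Aut$ to $\Out$, I would verify that the centers of $\Mod(S)$ and $\Mod^{\pm}(S)$ are trivial. For infinite-type $S$ this is standard: given any nontrivial mapping class $f$, one selects a simple closed curve moved by $f$ and a Dehn twist about a curve intersecting it essentially, obtaining an element that does not commute with $f$. Triviality of the center gives $\Inn(\Mod(S)) \cong \Mod(S)$ and $\Inn(\Mod^{\pm}(S)) \cong \Mod^{\pm}(S)$. Since $\Mod(S)$ is an index-two subgroup of $\Mod^{\pm}(S)$ (the kernel of the orientation character), we conclude that $\Out(\Mod^{\pm}(S)) = 1$, and that $\Out(\Mod(S)) \cong \Mod^{\pm}(S)/\Mod(S) \cong \mathbb{Z}_2$, the nontrivial class being represented by conjugation by any orientation-reversing homeomorphism.

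The main obstacle is the curve graph rigidity theorem in the infinite-type setting. Its proof demands a careful local analysis of $\mathcal{C}(S)$, identifying topological features such as non-separating versus separating curves, bounding pairs, pants decompositions and accumulation behaviour at ends, purely in graph-theoretic terms, and then geometrically reconstructing a homeomorphism of $S$ that realises a given simplicial automorphism. The subsequent algebraic characterization of Dehn twists inside $\Mod(S)$ is the second subtlest step, since standard Ivanov-type arguments rely on finiteness of certain conjugacy data that is not available here, so one must replace them with characterizations that survive in a Polish, non-finitely-generated group.
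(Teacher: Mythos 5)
The paper does not actually prove this proposition: it is quoted verbatim from \cite[Theorem 1.1]{MR4098634} (Bavard--Dowdall--Rafi), so there is no internal argument to compare yours against. Your outline does reproduce the strategy of that cited work --- curve-graph rigidity for $\mathcal{C}(S)$, an algebraic characterization of Dehn twists, triviality of the centers, and the index-two observation --- and the soft parts of your argument are fine: once one has $\Aut(\Mod(S)) \cong \Mod^{\pm}(S)$ and $\Aut(\Mod^{\pm}(S)) \cong \Mod^{\pm}(S)$ together with trivial centers, the conclusions $\Out(\Mod^{\pm}(S))=1$ and $\Out(\Mod(S)) \cong \Mod^{\pm}(S)/\Mod(S) \cong \mathbb{Z}_2$ follow, using that every such $S$ embeds symmetrically in $\mathbb{R}^3$ so that an orientation-reversing involution exists and $[\Mod^{\pm}(S):\Mod(S)]=2$.

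As a self-contained proof, however, the proposal has a genuine gap, and you have located it yourself: the two steps you defer as ``obstacles'' --- (i) that every simplicial automorphism of $\mathcal{C}(S)$ is induced by a homeomorphism of $S$ when $S$ has infinite type, and (ii) that an abstract automorphism of $\Mod(S)$ permutes the set of powers of Dehn twists --- are not technical loose ends but the entire mathematical content of the theorem. Each is a separate substantial result in the infinite-type setting: curve-graph rigidity for infinite-type surfaces is a theorem of Hern\'andez--Morales--Valdez, and the twist characterization, which cannot rely on the finite-generation and finite-index arguments underlying Ivanov's original proof, is the main contribution of the cited Bavard--Dowdall--Rafi paper. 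A small correction to step (ii): since conjugation by an orientation-reversing class sends $T_c$ to $T_{h(c)}^{-1}$ (Lemma \ref{lem:twistprop1}), the set your automorphism can be expected to preserve is $\{T_c^{\pm 1}\}$ rather than $\{T_c\}$; this does not harm the construction of the induced map on $\mathcal{C}(S)$, but it must be built into whatever algebraic characterization you use. Without proofs (or at least citations) of (i) and (ii), the proposal is a proof program rather than a proof; in the context of the present paper the honest justification is simply the citation, which is what the authors give.
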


As mentioned earlier, we assume that our surface $S$ is symmetric about the $xy$-plane when embedded in $\mathbb{R}^3$.  Thus, there is an orientation-reversing self-homeomorphism $$\m: S \rightarrow S$$ which maps a point on $S$ to its mirror image about the $xy$-plane.  Clearly, $\m$ is an involution of $S$. For notational convenience, let $\p$ denote the automorphism of $\Mod(S)$ given by $$\p(h )= \m h \m$$
 for all $h \in \Mod(S)$. Then $\p$ is a representative of the non-trivial element of $\Out(\Mod(S))$.
\par   
For a simple closed curve $c$ on a surface $S$, let $T_c$ denote the left-hand Dehn twist along $c$. By abuse of notation, we denote a curve and its isotopy class by the same symbol, and use the same convention for homeomorphisms as well. The following lemma, perhaps well-known, gives a relation between the Dehn twist and its conjugate by a homeomorphism of $S$. The proof of the following lemma is a slight modification of the arguments of \cite[Fact 3.7]{MR2850125}. See also \cite[Lemma 2.1]{MR2135748}.

\begin{lemma}
	\label{lem:twistprop1}
If $h$ is a homeomorphism of a surface $S$ and $c$ a simple closed curve on $S$, then
$$
h T_c h^{-1} = T_{h(c)}^{\epsilon},
$$
where $\epsilon=1$ when $h$ is orientation-preserving and $\epsilon=-1$ when $h$ is orientation-reversing.
\end{lemma}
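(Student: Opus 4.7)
The plan is to localize the computation to an annular neighborhood of $c$ and then track how conjugation by $h$ reparametrizes the standard twist model.

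First, I would fix a closed annular neighborhood $A$ of $c$ together with an orientation-preserving parametrization $\alpha : S^1 \times [0,1] \to A$ satisfying $c = \alpha(S^1 \times \{1/2\})$. By definition, a representative of $T_c$ is the identity outside $A$, while on $A$ it is given by $\alpha \circ \tau \circ \alpha^{-1}$, where $\tau(\theta, t) = (\theta + 2\pi t, t)$ is the standard left-hand twist on the model annulus $S^1 \times [0,1]$.

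Next, a direct computation shows that $h T_c h^{-1}$ is the identity on $S \setminus h(A)$, and on $h(A)$ equals $(h \circ \alpha) \circ \tau \circ (h \circ \alpha)^{-1}$. Thus $h T_c h^{-1}$ is supported on an annular neighborhood of $h(c)$ and, by uniqueness of annular twists, represents some power of $T_{h(c)}$. The map $h \circ \alpha$ is an orientation-preserving parametrization of $h(A)$ exactly when $h$ is orientation-preserving. In that case the definition of the Dehn twist along $h(c)$, applied with the parametrization $h \circ \alpha$, gives $h T_c h^{-1} = T_{h(c)}$ on the nose.

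For the orientation-reversing case I would use the following model calculation: the involution $\sigma(\theta, t) = (-\theta, t)$ of the annulus is orientation-reversing and satisfies $\sigma \tau \sigma^{-1} = \tau^{-1}$ by direct substitution. Now fix any orientation-preserving parametrization $\alpha' : S^1 \times [0,1] \to h(A)$ around $h(c)$, so that $T_{h(c)} = \alpha' \circ \tau \circ (\alpha')^{-1}$. Then $(\alpha')^{-1} \circ h \circ \alpha$ is an orientation-reversing self-homeomorphism of the model annulus preserving the core circle, hence isotopic to $\sigma$ relative to the boundary (by the classification of annulus mapping classes). Consequently
\[
h T_c h^{-1} \;=\; (h\alpha)\, \tau\, (h\alpha)^{-1} \;=\; \alpha'\, \sigma \tau \sigma^{-1}\, (\alpha')^{-1} \;=\; \alpha'\, \tau^{-1}\, (\alpha')^{-1} \;=\; T_{h(c)}^{-1}.
\]

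The main subtlety is pinning down the sign in the orientation-reversing case, i.e.\ verifying that any orientation-reversing annulus homeomorphism conjugates the standard twist to its inverse. This reduces to the classification of annulus mapping classes (whose boundary-fixing part is infinite cyclic, generated by $\tau$) together with the explicit identity $\sigma \tau \sigma^{-1} = \tau^{-1}$ for the reflection $\sigma$. Everything else is bookkeeping.
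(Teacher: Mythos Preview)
The paper does not actually prove this lemma; it only cites Fact~3.7 of Farb--Margalit and Lemma~2.1 of Korkmaz, and your argument is precisely the standard annulus-model computation underlying those references. One small imprecision worth noting: the map $(\alpha')^{-1}\circ h\circ\alpha$ need not fix $\partial(S^1\times[0,1])$ pointwise, so the phrase ``isotopic to $\sigma$ relative to the boundary'' is not literally correct; the cleanest repair is to take the specific parametrization $\alpha' := h\circ\alpha\circ\sigma$ (which is orientation-preserving since both $h\alpha$ and $\sigma$ reverse orientation), so that $(\alpha')^{-1}\circ h\circ\alpha=\sigma$ exactly and your displayed chain of equalities holds on the nose on $h(A)$.
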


By an {\it essential curve} on a surface $S$, we mean a simple closed curve on $S$ which does not bound a disk, a punctured disk, or an annulus. Let $C(S)$ be the set of isotopy classes of all essential simple closed curves on $S$. Given a subset $A$ of $C(S)$, we define the subset $U_A$ of $\Mod(S)$ by 
$$U_A = \{ \phi \in \Mod(S)~\mid~ \phi(a) = a \text{ for all } a \in A \}.$$
It is known that if $S$ has empty boundary and $\Mod(S)$ has trivial center, then the set 
$$\mathcal{B}=\{ \psi U_A ~\mid~ \psi \in \Mod(S)~~  \textrm{and}~~ A ~~  \text{is a finite subset of}~~C(S) \}$$
 is a basis for the topology on $\Mod(S)$. Note that, the center of the big mapping class group is always trivial \cite[Proposition 2]{MR4115214}. The following basic observation shall be used later.

\begin{lemma}\label{lemma:h=f}
Suppose that $g, h \in \Mod(S)$. Then $h \in g U_A$ if and only if $h U_A = g U_A$.
\end{lemma}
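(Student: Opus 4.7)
The plan is to reduce the statement to the standard fact that two left cosets of a subgroup either coincide or are disjoint. The only real content is to observe that $U_A$ is a subgroup of $\Mod(S)$.

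First I would verify that $U_A$ is a subgroup. The identity is trivially in $U_A$. If $\phi \in U_A$, then $\phi(a) = a$ for all $a \in A$ implies $\phi^{-1}(a) = a$ for all $a \in A$, so $\phi^{-1} \in U_A$. If $\phi_1, \phi_2 \in U_A$, then $(\phi_1 \phi_2)(a) = \phi_1(\phi_2(a)) = \phi_1(a) = a$ for every $a \in A$, so $\phi_1 \phi_2 \in U_A$. Thus $gU_A$ is a genuine left coset.

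For the forward implication, suppose $h \in gU_A$. Then $h = gu$ for some $u \in U_A$, and since $uU_A = U_A$ (as $u \in U_A$ and $U_A$ is a subgroup), we get $hU_A = guU_A = gU_A$. For the reverse implication, suppose $hU_A = gU_A$. Since the identity $\id \in U_A$, we have $h = h\cdot\id \in hU_A = gU_A$.

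There is essentially no obstacle here; the statement is a formal consequence of $U_A$ being a subgroup, together with elementary coset arithmetic. The lemma is included presumably to streamline notation in later arguments where one needs to pass between the two formulations when working with the topological basis $\mathcal{B}$ described above.
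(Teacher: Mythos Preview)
Your proof is correct and follows essentially the same approach as the paper: both arguments amount to the elementary coset fact that $h \in gU_A$ iff $hU_A = gU_A$, using that $U_A$ is closed under inverses (and contains the identity). You are slightly more explicit in verifying that $U_A$ is a subgroup, whereas the paper leaves this implicit.
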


\begin{proof}
If $h \in g U_A$, then $h = g \phi$ for some $\phi \in U_A$. Thus, 
$g = h \phi^{-1} \in h U_A$, and hence $g U_A = h U_A$. Conversely, if $h U_A = g U_A$, then $h \in g U_A $.
\end{proof}
\medskip

\subsection{Curve graphs of surfaces}
We say that a simple closed curve $c$ on a surface $S$ is {\it non-separating} if $S \setminus c$ is connected; otherwise, it is called {\it separating}. The {\it intersection number} $ i(c,c')$ between two curves $c, c'$ on $S$ is defined to be the minimum number of points of intersections between the representatives in their isotopy classes. A {\it multicurve} on $S$ is a set of pairwise (non-isotopic) disjoint curves. 
\par

The set $C(S)$ can be viewed as a graph whose vertex set is $C(S)$ and there is an edge between two vertices if the corresponding curves on $S$ are disjoint. The graph $C(S)$ is referred as {\it curve graph} of $S$. There is a natural metric $d_{C(S)}$ on $C(S)$ by defining the length of each edge as 1. We shall need the following result \cite[Proposition 4.6]{MR1714338}.

\begin{proposition}\label{prop:anosovbound}
Let $S_{g,n}$ be a surface of genus $g$ with $n$ punctures such that $3g + n \ge 5$. Then there exists a real number $c > 0$ such that, for any pseudo-Anosov $h \in \Mod(S_{g,n})$,  $\gamma\in C(S_{g,n})$ and  $k\in \mathbb Z$, we have
$$
d_{C(S_{g,n})}(h^k(\gamma), \gamma) \geq c |k|.
$$
In particular, for any pseudo-Anosov $h \in \Mod(S_{g,n})$ and $\gamma\in C(S_{g,n})$, there exists $\ell \in \mathbb N$ such that
$$
d_{C(S_{g,n})}(h^m(\gamma), \gamma) > 2
$$
for all $m\geq \ell$.
\end{proposition}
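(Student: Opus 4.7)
The plan is to deduce the statement from Masur and Minsky's theorems on the coarse geometry of the curve graph, together with Bowditch's uniform lower bound on pseudo-Anosov translation lengths. I would proceed as follows.

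First, I would invoke Masur and Minsky's theorem that, under the hypothesis $3g+n \geq 5$, the curve graph $C(S_{g,n})$ is $\delta$-hyperbolic for some constant $\delta = \delta(g,n)$. Next, given a pseudo-Anosov $h \in \Mod(S_{g,n})$ with dilatation $\lambda(h)$, I would build an $h$-invariant quasi-axis in $C(S_{g,n})$ by projecting the Teichm\"uller axis $\mathcal{L}_h \subset \mathcal{T}(S_{g,n})$ (on which $h$ translates by $\log \lambda(h)$) via the systole map $\pi$ that sends a marked hyperbolic structure to a curve of minimal hyperbolic length. The key technical input is that $\pi(\mathcal{L}_h)$ is a $K$-parameterised quasi-geodesic in $C(S_{g,n})$ with $K = K(g,n)$ depending only on the topology of the surface; this is the heart of the original Masur--Minsky argument and requires a careful analysis of how hyperbolic lengths of short curves evolve along Teichm\"uller geodesics.

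Granted the quasi-axis, the linear lower bound for $\gamma \in \pi(\mathcal{L}_h)$ is immediate from the fact that $h$ slides along the axis by a definite amount. For an arbitrary $\gamma \in C(S_{g,n})$, I would take a nearest-point projection $\gamma_0$ onto $\pi(\mathcal{L}_h)$, use $\delta$-hyperbolicity to show that the $h$-orbit of $\gamma$ fellow-travels the $h$-orbit of $\gamma_0$, and then transfer the linear estimate at the cost of an additive constant that can be absorbed into a slightly smaller slope. To obtain a single $c > 0$ that works for \emph{every} pseudo-Anosov simultaneously, I would appeal to Bowditch's theorem that the stable translation length of any pseudo-Anosov on $C(S_{g,n})$ is bounded below by a positive constant depending only on the surface.

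The second assertion is an immediate consequence of the first: for a fixed pseudo-Anosov $h$ and fixed $\gamma$, one simply chooses any integer $\ell > 2/c$, so that $d_{C(S_{g,n})}(h^m(\gamma), \gamma) \geq c m > 2$ whenever $m \geq \ell$. The main obstacle is the quasi-geodesicity of $\pi(\mathcal{L}_h)$, which sits at the foundation of the Masur--Minsky machinery; once that is in place, the uniform linear lower bound reduces to standard hyperbolic-geometry arguments combined with Bowditch's uniform translation-length estimate.
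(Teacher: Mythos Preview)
The paper does not prove this proposition; it quotes it as \cite[Proposition~4.6]{MR1714338} and uses it as a black box. Your sketch is essentially a reconstruction of the Masur--Minsky argument (hyperbolicity of $C(S_{g,n})$, uniform quasi-geodesicity of systole images of Teichm\"uller geodesics, translation of $h$ along its Teichm\"uller axis), so you are aligned with the source being cited.

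Two minor points. The separate appeal to Bowditch is unnecessary: Masur--Minsky's Proposition~4.6 already delivers a constant $c$ uniform over all pseudo-Anosovs. And the step ``absorb the additive constant into a slightly smaller slope'' does not literally work, since the additive error $2\,d(\gamma,\gamma_0)$ depends on $\gamma$ and can be arbitrarily large; the clean fix is the elementary subadditivity $d(h^{m+n}\gamma,\gamma)\le d(h^{m}\gamma,\gamma)+d(h^{n}\gamma,\gamma)$, which yields $d(h^{k}\gamma,\gamma)\ge |k|\,\tau(h)$ for every $k$ and every $\gamma$ once one knows the stable translation length $\tau(h)$ is bounded below. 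Your deduction of the ``in particular'' clause is correct and in fact gives an $\ell$ depending only on the surface, which is exactly how the statement is applied in Theorems~\ref{thm:cantor} and~\ref{thm:non-displaceable}.
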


We say that a subsurface $\Sigma$ and a simple closed curve $c$ on a surface $S$ have a \textit{non-trivial geometric intersection} if there exist $a \in C(\Sigma)$ such that $i(a,c) \neq 0$.

\begin{definition}
Let $\Sigma$ be a subsurface of a surface $S$ such that the embedding of $\Sigma$ into $S$ induces an embedding of $\mathcal{C}(\Sigma)$ into $\mathcal{C}(S)$. Let $c \in \mathcal{C}(S)$ be such that it has a non-trivial geometric intersection with $\Sigma$. Assume that the boundary $\partial (\Sigma)$ and $c$ are in a minimal position. Then a {\it projection} of $c$ onto $\mathcal{C}(\Sigma)$ is defined to be any component $b \in \mathcal{C}(\Sigma)$ of the boundary of a regular neighborhood of $\alpha \cup \partial(\Sigma)$, where $\alpha$ is a component of $c \cap \Sigma$.
\end{definition}

For example, let $S$ be a surface and $\Sigma$ be a genus two subsurface of $S$ with a single boundary component. Let $c$ be a curve on $S$ as illustrated in Figure~\ref{fig:projection}. Then the curves $b_1$ and $b_2$ are two projections of $c$ onto $\Sigma$  (see Figure~\ref{fig:projection}).
		\begin{figure}[H]
	\labellist
	\tiny
	\pinlabel $\partial(\Sigma)$ at 205, 140
	\pinlabel $c$ at 250, 110
	\pinlabel $\Sigma$ at 100, 130
	\pinlabel $b_1$ at 150, 95
	\pinlabel $b_2$ at 150, 28
	\endlabellist
	\centering
	\includegraphics[scale=0.53]{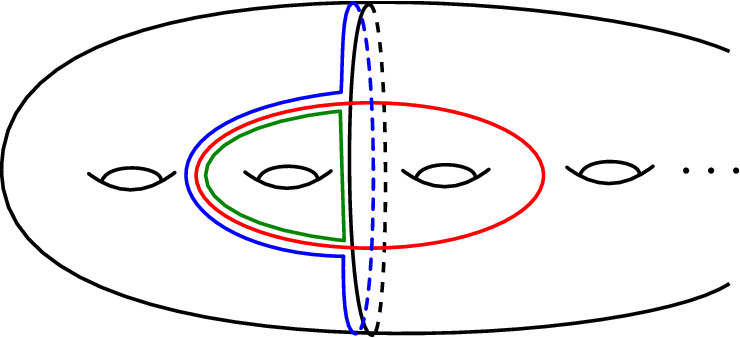}
	\caption{Projections of the curve $c$ on $\Sigma$.}
	\label{fig:projection}
\end{figure}

Note that, if each component of $\partial (\Sigma)$ is an essential separating simple closed curve on $S$, then  $\mathcal{C}(\Sigma)$ embeds into $\mathcal{C}(S)$. Further, for each $c \in \mathcal{C}(S)$ such that it has a non-trivial geometric intersection with $\Sigma$, such a projection $b \in \mathcal{C}(\Sigma)$ always exists, and if $c \in \mathcal{C}(\Sigma)$, then $b=c$.

\begin{lemma}\cite[Lemma 2.2]{MR1791145}\label{prop:projectionbound}
If $c \in \mathcal{C}(S)$ and $b,b^{\prime} \in  \mathcal{C}(\Sigma)$ are two projections of $c$, then $d_{\mathcal{C}(\Sigma)}\left(b, b^{\prime}\right) \leq 2$.
\end{lemma}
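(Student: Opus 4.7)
The plan is to split into two cases based on whether $b$ and $b'$ are produced from the same component of $c \cap \Sigma$ or from different ones. Write $b$ as a component of $\partial N(\alpha \cup \partial(\Sigma))$ and $b'$ as a component of $\partial N(\alpha' \cup \partial(\Sigma))$, where $\alpha, \alpha'$ are components of $c \cap \Sigma$ and $N(\cdot)$ denotes a regular neighborhood in $\Sigma$.

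First, if $\alpha = \alpha'$, then $b$ and $b'$ are both components of the \emph{same} boundary $\partial N(\alpha \cup \partial(\Sigma))$. By the definition of a regular neighborhood, this boundary is a disjoint union of simple closed curves in $\Sigma$, so either $b = b'$ or $b$ and $b'$ have disjoint representatives. In either case $d_{\mathcal{C}(\Sigma)}(b,b') \leq 1$.

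Next, suppose $\alpha \neq \alpha'$. Since they are distinct components of $c \cap \Sigma$, the arcs are disjoint, with all endpoints on $\partial(\Sigma)$. The plan is to enlarge the picture by choosing a regular neighborhood $N := N(\alpha \cup \alpha' \cup \partial(\Sigma))$ fat enough that both $N(\alpha \cup \partial(\Sigma))$ and $N(\alpha' \cup \partial(\Sigma))$ sit inside $\operatorname{int}(N)$; thin enough regular neighborhoods can always be nested this way. Then every component of $\partial N$ lies in $\Sigma \setminus \operatorname{int}(N)$, and therefore is automatically disjoint from both $b$ and $b'$. If some component $\gamma$ of $\partial N$ represents a vertex of $\mathcal{C}(\Sigma)$, the chain $b, \gamma, b'$ is a path of length at most two in $\mathcal{C}(\Sigma)$, yielding $d_{\mathcal{C}(\Sigma)}(b,b') \leq 2$.

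The step I expect to be the main obstacle is producing such an essential bridging curve $\gamma$ in $\partial N$. This is a complexity check: one must rule out that every component of $\partial N$ either bounds a disk, a once-punctured disk, or an annulus onto a component of $\partial(\Sigma)$. The key input is that $b, b' \in \mathcal{C}(\Sigma)$ are themselves essential, forcing $\Sigma$ to have sufficient topological complexity. A short Euler-characteristic accounting on the components of $\Sigma \setminus \operatorname{int}(N)$, combined with the observation that the bands carrying $\alpha$ and $\alpha'$ in $N$ are already witnessed to produce essential curves (namely $b$ and $b'$), then isolates at least one non-peripheral, non-trivial component $\gamma$ of $\partial N$. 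Once such $\gamma$ is in hand, the bound $d_{\mathcal{C}(\Sigma)}(b, b') \leq 2$ is immediate from the path exhibited above.
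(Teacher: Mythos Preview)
The paper does not give its own proof of this lemma; it is quoted verbatim from Masur--Minsky as a black box, so there is no in-paper argument to compare against.

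Your Case~1 is fine. In Case~2 the strategy---produce an essential bridging curve $\gamma\subset\partial N(\alpha\cup\alpha'\cup\partial\Sigma)$ disjoint from both $b$ and $b'$---is the natural one, and you correctly flag the existence of such a $\gamma$ as the crux. However, the Euler-characteristic sketch you offer does not close the gap, and in fact the claimed conclusion is false at that level of generality. Take $\Sigma$ to be a four-holed sphere with boundary circles $\partial_1,\dots,\partial_4$, let $\alpha$ run from $\partial_1$ to $\partial_2$, and let $\alpha'$ run from $\partial_1$ to $\partial_3$ disjoint from $\alpha$. Then $b$ and $b'$ are the essential curves separating $\{1,2\}$ from $\{3,4\}$ and $\{1,3\}$ from $\{2,4\}$ respectively, yet the unique non-$\partial\Sigma$ component of $\partial N$ is isotopic to $\partial_4$ and hence peripheral. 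So no essential $\gamma$ exists even though $b,b'\in\mathcal{C}(\Sigma)$; the heuristic ``$b,b'$ essential forces enough complexity'' does not go through.

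What actually rescues the lemma is the hypothesis that $\alpha$ and $\alpha'$ are components of $c\cap\Sigma$ for a \emph{single} simple closed curve $c$, together with the paper's standing assumption that each component of $\partial\Sigma$ separates $S$. These impose parity constraints on the number of arc-endpoints landing on each $\partial_i$ (each must be even), which rules out configurations like the one above. Your sketch never invokes this structure. To finish, you must either use these constraints on $c$ explicitly in the complexity count, or follow Masur--Minsky and route the argument through the arc (or arc-and-curve) complex of $\Sigma$, where the disjoint arcs $\alpha,\alpha'$ themselves serve as intermediate vertices and no search for an essential closed $\gamma$ is needed.
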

\medskip

\section{Twisted Rokhlin property}\label{section 3}

Recall that, a topological group is said to have {\it Rokhlin property} if it contains a dense conjugacy class.  Equivalently,  the topological group has a dense orbit under its conjugation action on itself. See, for example, the introduction of \cite{MR2308230} for a survey on Rokhlin property.

\begin{example}
Some topological groups with the Rokhlin property are:
\begin{enumerate} 
\item The symmetric group on $\mathbb{N}$.
\item The homeomorphism group of the Cantor set \cite[Theorem 2.6]{MR1867311}.
\item The group of orientation-preserving homeomorphisms of an even-dimensional sphere \cite[Theorem 3.4]{MR1867311}.
\item The (big) mapping class group of a connected orientable surface which is either the 2-sphere or non-compact with genus either zero or infinite and whose end space is self-similar with a unique maximal end \cite[Theorem 6.1]{MR4492497}.
\end{enumerate}
\end{example}

\begin{example}
Some topological groups without the Rokhlin property are: 
\begin{enumerate}
\item A non-trivial closed subgroup of $GL_n(\mathbb C)$ \cite[Lemma 5.2]{MR3415606}.
\item A non-trivial connected locally compact group  \cite[Theorem 5.3]{MR3415606}.
\end{enumerate}
\end{example}

Next we consider the idea of twisted conjugacy.

\begin{definition}
Let $G$ be a group and $\phi$ an automorphism of $G$. Two elements $x, y \in G$ are said to be $\phi$-twisted conjugate, denoted by $x \sim_\phi y$, if $y = gx\phi(g)^{-1}$ for some $g\in G$.   The resulting equivalence classes are called as {\it $\phi$-twisted conjugacy classes}, and we denote such a class by $C_\phi(x)$.
\end{definition}

Note that, given an automorphism $\phi$ of a group $G$, the map $G\times G\to G$ given by $(g,h) \mapsto gh\phi(g)^{-1}$ defines a left action of $G$ on itself.  Thus, $\phi$-twisted conjugacy classes are simply orbits under this action.
\begin{definition}
Let $G$ be a topological group and $\phi$ an automorphism of the underlying abstract group $G$. We say that $G$ has the {\it $\phi$-twisted Rokhlin property} if $G$ admits a dense $\phi$-twisted conjugacy class. Further, $G$ is said to have the {\it twisted Rokhlin property} if it has  $\phi$-twisted Rokhlin property for each automorphism $\phi$ of the underlying abstract group $G$.
\end{definition}

The subsequent proposition significantly simplifies the task of verifying the twisted Rokhlin property for a topological group.

\begin{proposition} \label{lem:twistrokh}
Let $G$ be a topological group, and $\phi, \psi$ be two automorphisms of $G$ such that $\psi \in \Inn(G)\phi$. Then $G$ has the $\phi$-twisted Rokhlin property if and only if it has the $\psi$-twisted Rokhlin property. In particular, if $\psi \in \Inn(G)$, then $G$ has the $\psi$-twisted Rokhlin property if and only if it has the Rokhlin property.
\end{proposition}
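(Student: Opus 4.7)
The plan is to start by unpacking the hypothesis $\psi \in \Inn(G)\phi$: choose $a \in G$ with $\psi = \iota_a \circ \phi$, where $\iota_a$ denotes conjugation by $a$. Then for every $g \in G$ one has $\psi(g) = a\phi(g)a^{-1}$, and hence $\psi(g)^{-1} = a\phi(g)^{-1}a^{-1}$. The proof will then exhibit an explicit homeomorphism of $G$ that intertwines the $\phi$-twisted and $\psi$-twisted conjugation actions, so that dense orbits of one action correspond to dense orbits of the other.

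The key computation is with right translation $\rho_{a^{-1}}: G \to G$, $y \mapsto ya^{-1}$. Using the relation above,
\[
g\cdot (xa^{-1}) \cdot \psi(g)^{-1} \;=\; gxa^{-1}\cdot a\phi(g)^{-1}a^{-1} \;=\; \bigl(gx\phi(g)^{-1}\bigr)a^{-1},
\]
so $y = gx\phi(g)^{-1}$ if and only if $ya^{-1} = g(xa^{-1})\psi(g)^{-1}$. This shows $\rho_{a^{-1}}\bigl(C_\phi(x)\bigr) = C_\psi(xa^{-1})$, and since $a^{-1}$ ranges as before, $\rho_{a^{-1}}$ induces a bijection between the set of $\phi$-twisted conjugacy classes and the set of $\psi$-twisted conjugacy classes.

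Now the topological content enters: in any topological group right translation is a homeomorphism, so $\rho_{a^{-1}}$ preserves the property of being dense in $G$. Consequently $C_\phi(x)$ is dense in $G$ precisely when $C_\psi(xa^{-1})$ is, which gives the equivalence of the $\phi$-twisted and $\psi$-twisted Rokhlin properties. For the ``in particular'' clause, take $\phi = \Id$; the $\Id$-twisted action is ordinary conjugation $(g,h) \mapsto ghg^{-1}$, so the $\Id$-twisted Rokhlin property is exactly the Rokhlin property, and the first part applied to $\psi \in \Inn(G) = \Inn(G)\,\Id$ yields the claim.

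There is no genuine obstacle here: the entire argument reduces to the identity $g(xa^{-1})\psi(g)^{-1} = (gx\phi(g)^{-1})a^{-1}$ together with continuity of right translation. The only mildly delicate point worth stating carefully is that the correspondence of orbits is via a base-point shift $x \mapsto xa^{-1}$, rather than an identification of individual orbits with the same representative; this is why the statement is an equivalence of the two Rokhlin properties rather than an identification of any particular pair of classes.
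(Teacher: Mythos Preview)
Your proof is correct and essentially identical to the paper's own argument. Both proceed by writing $\psi = \iota_a\circ\phi$ (the paper uses $\hat g$ for $\iota_a$), computing that right translation by $a^{-1}$ carries $C_\phi(x)$ onto $C_\psi(xa^{-1})$, and concluding via the fact that right translation is a homeomorphism of $G$; the only cosmetic difference is that the paper phrases the same identity as $C_\phi(x) = R_a\bigl(C_\psi(xa^{-1})\bigr)$.
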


\begin{proof}
Let $\psi =\hat{g} \, \phi $, where $\hat{g}$ is the inner automorphism of $G$ induced by the element $g\in G$. We have
\begin{align*}
C_{\phi}(x) &= \{k x \phi(k)^{-1} \mid k \in G\}\\
&=\{ k x g^{-1} (g \phi(k)^{-1} g^{-1}) g \mid k\in G \}\\
&=\{ k (x g^{-1}) \,\hat{g} (\phi(k)^{-1}) g \mid k \in G \}\\
&=\{k (x g^{-1}) \,\psi(k)^{-1} g \mid k \in G\}\\
&=R_g(C_{\psi}(xg^{-1} )),
\end{align*}
where $R_g:G \to G$ is right multiplication by $g$. Since $R_g$ is a homeomorphism of $G$, it follows that $C_\phi(x)$ is dense in $G$ if and only if $C_{\psi}(xg^{-1})$ is dense in $G$.
\end{proof}

\begin{remark}
The preceding lemma implies that if $\Aut(G) = \Inn(G)$, then $G$ has  the Rokhlin property if and only if $G$ has the twisted Rokhlin property. In particular, if $G$ is the symmetric group on a countably infinite set, then every automorphism of $G$ is inner, and hence $G$ has the twisted Rokhlin property.
\end{remark}

\begin{proposition}	\label{lem:open}
Let $G$ be a topological group admitting a proper open subgroup $H$ such that $gH\phi(g)^{-1} = H$ for all $g \in G$. Then $G$ does not have the $\phi$-twisted Rokhlin property. In particular, if there exists a proper open normal subgroup $H$, then $G$ does not have the Rokhlin property. 
\end{proposition}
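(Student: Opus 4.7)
The plan is to exploit the fact that the hypothesis $gH\phi(g)^{-1} = H$ for every $g\in G$ is exactly the statement that $H$ is invariant under the left $\phi$-twisted conjugation action $(g,h)\mapsto gh\phi(g)^{-1}$. Consequently $H$ is a union of $\phi$-twisted conjugacy classes, and its complement $G\setminus H$ is also invariant, hence a union of such classes. Thus for any $x \in G$ the class $C_\phi(x)$ is contained either entirely in $H$ or entirely in $G\setminus H$.

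Next I would use the topology. Since $H$ is open, every coset $gH$ is open, so $G\setminus H = \bigcup_{g\notin H} gH$ is open; and because $H$ is proper, $G\setminus H$ is non-empty. Therefore any class $C_\phi(x) \subseteq H$ misses the non-empty open set $G\setminus H$, and any class $C_\phi(x) \subseteq G\setminus H$ misses the non-empty open set $H$. In either case $C_\phi(x)$ fails to be dense, so $G$ cannot have the $\phi$-twisted Rokhlin property.

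For the final assertion, I would specialise to $\phi = \Id$. A proper open normal subgroup $H$ satisfies $gH\phi(g)^{-1} = gHg^{-1} = H$ for all $g\in G$, so the first part applies; since the Rokhlin property coincides with the $\Id$-twisted Rokhlin property (by Proposition \ref{lem:twistrokh}, or directly from the definitions), we conclude that $G$ does not have the Rokhlin property.

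There is no serious obstacle here — the argument is essentially a one-line observation once one recognises that the hypothesis is precisely the invariance of $H$ under the twisted conjugation action. The only point requiring minimal care is verifying that the complement of an open subgroup is again open (a standard fact in topological groups) so that both $H$ and $G\setminus H$ genuinely witness non-density.
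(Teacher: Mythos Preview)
Your proof is correct and follows essentially the same approach as the paper: both argue that the hypothesis forces each $\phi$-twisted conjugacy class to lie either entirely in $H$ or entirely in its complement, and then use openness of $H$ (and of its cosets) to exhibit a non-empty open set missed by any given class. Your write-up is a bit more explicit about why $G\setminus H$ is open and about the specialisation to $\phi=\Id$, but the underlying idea is identical.
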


\begin{proof}
The given condition implies that,   for each $x \in G$, either $C_{\phi}(x) \subseteq H$ or $C_{\phi}(x) \cap H = \emptyset$. Thus, $C_{\phi}(x) \cap gH = \emptyset$ for some $g \in G$, where $gH$ is open. Therefore, $C_{\phi}(x)$ is not dense. Hence, $G$ does not have $\phi$-twisted Rokhlin property.
\end{proof}

\begin{remark}
By Proposition \ref{outer auto mcg}, if $S$ is an infinite-type surface, then  $\Aut(\Mod^{\pm}(S)) = \Inn(\Mod^{\pm}(S))$. Since $\Mod(S)$ is a proper open normal subgroup of $\Mod^{\pm}(S)$ (being a finite index subgroup), it follows that $\Mod^{\pm}(S)$ does not have the Rokhlin property. Consequently,  it does not have the twisted Rokhlin property. 
\end{remark}

\begin{definition}
Let $G$ be a topological group and let $\phi$ be an automorphism of $G$.  Then $G$ has the {\it  $\phi$-twisted joint embedding property}  ($\phi$-TJEP for short) if given any two non-empty open subsets $U$ and $V$ of $G$, there exists $g \in G$ such that $U \cap g V \phi(g)^{-1}\neq \emptyset$.
\end{definition}

Note that the set $g V \phi(g)^{-1}$ is open whenever $V$ is open. Recall that, a topological group is {\it Polish} if its underlying topological space is separable and completely metrizable. The next result extends \cite[Theorem 2.2]{MR4492497} to the $\phi$-twisted case.

\begin{proposition}\label{phi-TJEP}
Let $G$ be a Polish topological group and let $\phi$ be an automorphism of $G$. Then $G$ has the $\phi$-twisted Rokhlin property  if and only if $G$ has the $\phi$-TJEP.
\end{proposition}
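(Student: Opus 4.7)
The plan is to adapt the Baire category argument underlying the untwisted analogue \cite[Theorem 2.2]{MR4492497} to the $\phi$-twisted setting. The forward direction is elementary: if $C$ is a dense $\phi$-twisted conjugacy class and $U, V \subseteq G$ are non-empty open subsets, density supplies $x \in C \cap V$; since $C = C_\phi(x) = \{gx\phi(g)^{-1} : g \in G\}$, density again gives some $g \in G$ with $gx\phi(g)^{-1} \in U$, and this element lies in $U \cap gV\phi(g)^{-1}$, verifying the $\phi$-TJEP.

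For the converse I would exploit the fact that a Polish group is both second countable and completely metrizable. First, fix a countable basis $\{V_n\}_{n \in \mathbb{N}}$ of non-empty open sets of $G$, and for each $n$ consider
$$O_n \; = \; \{x \in G : C_\phi(x) \cap V_n \neq \emptyset\} \; = \; \bigcup_{g \in G} g^{-1} V_n \phi(g),$$
which is open as a union of open translates. The main step is to show that each $O_n$ is dense in $G$. Given any non-empty open $U \subseteq G$, apply $\phi$-TJEP to the pair $(U, V_n)$ to produce $h \in G$ with $U \cap hV_n\phi(h)^{-1} \neq \emptyset$. Setting $g = h^{-1}$ and using $\phi(h^{-1}) = \phi(h)^{-1}$ yields $hV_n\phi(h)^{-1} = g^{-1} V_n \phi(g) \subseteq O_n$, so $U \cap O_n \neq \emptyset$.

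By the Baire category theorem applied to the completely metrizable space $G$, the intersection $\bigcap_n O_n$ is a dense $G_\delta$, hence in particular non-empty; any $x$ in this intersection has $C_\phi(x)$ meeting every basic open set $V_n$, and therefore $C_\phi(x)$ is dense in $G$. The only subtlety beyond the classical proof is bookkeeping with the automorphism $\phi$: one must observe that the open sets $g^{-1} V_n \phi(g)$ (rather than the conjugates $g^{-1} V_n g$) form the correct saturation of $V_n$ under $\phi$-twisted conjugacy, and that the family $\{hV\phi(h)^{-1}\}_{h \in G}$ appearing in the $\phi$-TJEP coincides with the family $\{g^{-1}V\phi(g)\}_{g \in G}$ after the substitution $g = h^{-1}$. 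Once this is recorded, the Baire argument transfers without incident.
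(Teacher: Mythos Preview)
Your proof is correct and follows essentially the same Baire-category argument as the paper: both directions match up after the substitution $g=h^{-1}$ you noted, with the paper writing $D_U=\bigcup_{g} gU\phi(g)^{-1}$ where you write $O_n=\bigcup_{g} g^{-1}V_n\phi(g)$. Your treatment of the easy (Rokhlin $\Rightarrow$ TJEP) direction is marginally more direct than the paper's---you pick $x\in C\cap V$ first rather than two separate twisted conjugates---but this is a cosmetic difference, not a different route.
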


\begin{proof}
Suppose that $G$ has the $\phi$-TJEP. Since $G$ is Polish,  it is second countable. Let $\mathcal{B}$ be a countable basis for $G$. For each $U \in \mathcal{B}$, define
$$
 D_U = \cup_{g\in G} ~g U \phi(g)^{-1}.
$$
Note that $D_U$ is open.  Further, it follows from the $\phi$-TJEP that $D_U$ is dense in $G$.  Since $G$ is completely metrizable, it is a Baire space. Hence, the set $D = \cap_{U\in \mathcal{B}} ~D_U$ is dense in $G$. Let $x\in D$ be an arbitrary element. Then, for each basis member $V\in \mathcal{B}$, there exists $g\in G$ (depending on $V$) such that $x \in g V \phi(g)^{-1}$. In other words, there exists $v \in V$ such that $v = g^{-1}x \phi(g)$, and hence $v \in V \cap C_\phi(x)$. This proves that $C_\phi(x)$ is dense in $G$.
\par 
Conversely, suppose that the $\phi$-twisted conjugacy class $C_\phi(x)$ is dense in $G$. Given non-empty open sets $U$ and $V$ in $G$, there exist $y, z \in G$ such that $y x \phi(y)^{-1}\in U$  and $z x \phi(z)^{-1}\in V$. Setting $h= y z^{-1}$, it is easy to see that $y x \phi(y)^{-1} \in U \cap h V \phi(h)^{-1}$. Hence, the group $G$ has the $\phi$-TJEP.
\end{proof}

\begin{remark}
It follows from \cite[Corollary 5]{Vlamisnotes}  that if $S$ is an infinite-type surface, then $\Mod(S)$ is a Polish group. Hence, $\Mod(S)$ has the $\phi$-twisted Rokhlin property if and only if it has the $\phi$-TJEP.
\end{remark}
\medskip

\section{Twisted Rokhlin property of big mapping class groups}\label{trp of bmcg}
It is known that the mapping class group of a surface with compact boundary (including no boundary) is discrete if and only if the surface is of finite-type. Thus, the mapping class group of a finite-type surface does not contain any proper dense subset, and consequently no dense $\phi$-twisted conjugacy class. Hence, we consider only infinite-type surfaces (without boundary).
\par

Recently, in \cite[Theorem 6.1]{MR4492497},  Lanier and Vlamis have shown that the big mapping class group of a connected orientable infinite-type surface without boundary has the Rokhlin property if and only if the surface is non-compact whose genus is either zero or infinite and whose end space is self-similar with a unique maximal end.
\par

If a topological group has the twisted Rokhlin property, then it has the Rokhlin property. Thus, we need to investigate the twisted Rokhlin property only for a non-compact surface whose genus is either zero or infinite and whose end space is self-similar with a unique maximal end. Further, in latter sections, we shall also see whether the big mapping class group of the remaining surfaces have the $\phi$-twisted Rokhlin property for some automorphism $\phi$. 

\medskip

\subsection{When the surface has a unique maximal end}
In this subsection, we assume that every compact subsurface of $S$ is displaceable and $\mathcal{M}(S)$ is a singleton set. Let $\mu$ be the unique maximal end.
Notice that, in this case, $S$ is either planar or has infinite genus, and the end space  $\mathcal{E}(S)$ is self-similar. Consider a separating simple closed curve $c$ on the surface $S$. Define $\Omega_c$ to be the component of $S \setminus c$ such that $\mu\in{\Omega}_c^*$ and $\Sigma_c$ to be the complement of $\Omega_c$ in $S$. Note that $\Sigma_c$ has $c$ as its unique  boundary component. Let $G$ be the subset of $\Mod(S)$ consisting of elements admitting representatives that restricts to the identity on $\Omega_c$ for some separating simple closed curve $c$. Note that the set
$$
\left\{{\Omega}_c^* ~\mid~ c~~ \textrm{is a separating simple closed curve on}~ S\right\}
$$
is a neighbourhood basis for $\mu$, and consequently $G$ is a subgroup of $\Mod(S)$. Since the surface $S$ is assumed to be symmetric about the $xy$-plane when embedded in $\mathbb{R}^3$, we have the orientation-reversing self-homeomorphism $\m: S \rightarrow S$. Given a subsurface $\Sigma$ of $S$, the intersections 
$$\Sigma^+:=\Sigma \cap \{(x,y,z)\in \mathbb{R}^3~\mid~ z \geq 0\} \quad \textrm{and} \quad \Sigma^-:=\Sigma \cap \{(x,y,z)\in \mathbb{R}^3~\mid~ z \leq 0\}$$ 
are called the positive and the negative parts of $\Sigma$, respectively. We now present an extension of \cite[Proposition 4.2]{MR4492497}. 

\begin{proposition}\label{prop:uniquemax}
Let $S$ be a surface in which every compact subsurface is displaceable and $S$ has a unique maximal end, say $\mu$. Let $G$ be the subgroup of $\Mod(S)$ defined above. Then the following assertions holds:
\begin{enumerate}
\item The subgroup $G$ is dense in $\Mod(S)$.
\item For each separating simple closed curve $c$ in $S$ such that $\m(\Sigma_c) = \Sigma_c$, there exists $g \in \Homeo^+(S)$ such that $g(\Sigma_c) \subset \Omega_c$ and $g \m g^{-1}\m$ is identity on $\Sigma_c$.
	\end{enumerate}
\end{proposition}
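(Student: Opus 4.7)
For part (1), the plan is to verify density of $G$ using the basis $\mathcal{B} = \{\psi U_A \mid \psi \in \Mod(S), \, A \text{ finite} \subset C(S)\}$ recalled in Section \ref{section 2}: it suffices to show $G \cap \psi U_A \neq \emptyset$ for every $\psi \in \Mod(S)$ and every finite $A \subset C(S)$. Since $A \cup \psi(A)$ is a finite multicurve, it lies in some compact subsurface $K$ of $S$. By Theorem \ref{compact subsurface is displaceable}, the assumptions on $S$ imply that $\mathcal{E}(S)$ is self-similar with $\mathcal{M}(S) = \{\mu\}$. Using this self-similarity together with displaceability, I would choose a separating simple closed curve $c$ in $S$ with $K \subset \Sigma_c$, and then apply the change-of-coordinates principle inside $\Sigma_c$ (extended by the identity on $\Omega_c$) to produce a homeomorphism $h \in \Homeo^+(S)$ supported in $\Sigma_c$ with $h(a) = \psi(a)$ for every $a \in A$. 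Such an $h$ lies in $G \cap \psi U_A$, yielding density. This essentially reproduces the argument of \cite[Proposition 4.2]{MR4492497}.

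For part (2), the key observation is that if $g \in \Homeo^+(S)$ commutes globally with $\m$, then
\[
g\m g^{-1}\m = \m g g^{-1}\m = \m^2 = \id,
\]
so the identity condition on $\Sigma_c$ is automatic. Hence the plan is to construct a $\m$-equivariant orientation-preserving homeomorphism $g$ of $S$ with $g(\Sigma_c) \subset \Omega_c$. Since $\m(\Sigma_c) = \Sigma_c$, the complement $\Omega_c$ is also $\m$-invariant, with $\mu \in \Omega_c^*$. Using self-similarity of $\mathcal{E}(S)$ together with the $\m$-invariance of the embedding of $S$ in $\mathbb{R}^3$, I would find a $\m$-invariant separating simple closed curve $c'$ in $\Omega_c$ bounding a subsurface $\Sigma' \subset \Omega_c$ and a $\m$-equivariant homeomorphism $\Sigma_c \to \Sigma'$, and then extend this to a $\m$-equivariant orientation-preserving homeomorphism $g$ of $S$ by using that, in a $\m$-equivariant fashion, $\Omega_c$ is homeomorphic to $\Sigma_c \cup_c (\Omega_c \setminus \Sigma')$.

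The main obstacle is the equivariant realization of the displacement, since standard change-of-coordinates arguments produce non-equivariant homeomorphisms. The plan to handle this is to construct $g$ symmetrically: first build it on the upper half $S^+ = S \cap \{z \geq 0\}$, being careful on the fixed set $S \cap \{z = 0\}$ of $\m$ (which must be mapped into itself), and then define $g$ on $S^- = S \cap \{z \leq 0\}$ by the rule $g(x) := \m g \m(x)$. This automatically enforces $g\m = \m g$. Orientation-preservation of the extension follows because $\m$ reverses orientation on both source and target, so the conjugation preserves the orientation behaviour of $g$. Throughout, the self-similar structure of $\mathcal{E}(S)$ ensures that enough room exists in $\Omega_c^+$ to house an equivariant copy of $\Sigma_c^+$, and compatibility along the $\m$-fixed arcs reduces to a one-dimensional matching problem that can be solved directly.
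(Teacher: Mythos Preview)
Your outline for part (1) is correct and coincides with what the paper does: it simply cites \cite[Proposition 4.2]{MR4492497}, and your sketch is exactly that argument.

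For part (2) your approach is correct but genuinely different from the paper's. You aim for a \emph{globally} $\m$-equivariant $g$, so that $g\m g^{-1}\m=\id$ holds on all of $S$; the paper instead builds a compactly supported $g$ that commutes with $\m$ only on $\Sigma_c$, which is all the statement requires. Concretely, the paper chooses a $\m$-invariant curve $c'$ with $\Sigma_{c'}\cong\Sigma_c$, encloses both in a $\m$-invariant $\Sigma_b$ via a pair of pants, picks an isometry $h$ of $\Sigma_b$ fixing $\partial\Sigma_b$ with $h(\Sigma_c)=\Sigma_{c'}$ and $h(\Sigma_c^+)=\Sigma_{c'}^+$, and then \emph{replaces} $h$ by $\m h\m$ only on $\Sigma_c^-$ (leaving $h$ unchanged on the pair of pants and on $\Sigma_{c'}$); the inverse of this piecewise map is the desired $g$. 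The verification that $g\m g^{-1}\m=\id$ on $\Sigma_c$ is then a direct two-line check.

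The trade-off: your route is conceptually cleaner (commutation is trivial once $g\m=\m g$) but carries the burden of producing a global $\m$-equivariant self-homeomorphism of the infinite-type surface $S$, in particular arranging the equivariant identification $\Omega_c\cong \Sigma_c\cup_c(\Omega_c\setminus\Sigma')$ and matching along the $\m$-fixed arcs. The paper's route avoids all global issues by keeping everything inside the compact $\Sigma_b$ and only symmetrising on $\Sigma_c$, at the price that $g\m g^{-1}\m$ is the identity only where it is needed. Both arguments rely on the same underlying ``build on the top half, reflect to the bottom half'' idea, just applied on different domains.
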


\begin{proof}
Assertion (1) follows from \cite[Proposition 4.2]{MR4492497}. Let $c$ be a separating simple closed curve in $S$ such that $\m (\Sigma_c) = \Sigma_c$. Since $S$ is of infinite-type, we can choose another separating simple closed curve $c^\prime$ such that $\Sigma_{c}\cap \Sigma_{c^\prime} =\emptyset$,  $\m (\Sigma_{c^\prime}) = \Sigma_{c^\prime}$ and $\Sigma_c\cong \Sigma_{c^\prime}$. Further, choose a separating simple closed curve $b$ such that the curves $\{c, c^{\prime}, b\}$ bounds a pair of pants (a subsurface homeomorphic to the two-sphere with three disks removed) and $\{c,c'\} \subset \Sigma_b$. Fix some hyperbolic metric on $\Sigma_b$ and choose an isometry $h:\Sigma_b \to \Sigma_b$ fixing $\partial(\Sigma_b)$ identically such that $h(\Sigma_c)=\Sigma_{c^\prime}$ and $h(\Sigma_c^+)=\Sigma_{c^\prime}^+$.  Define $f:S \to S$ by
	$$f(x) = \begin{cases}
	h(x) & \text{ if } x \in \Sigma_c^+,\\
	\m  h \m(x) & \text{ if } x \in \Sigma_c^-,\\
	h(x) & \text{ if } x \in \Sigma_b \setminus \Sigma_{c},\\
	x & \text{ if } x \in S \setminus \Sigma_b. 
	\end{cases}$$
	Thus $g:=f^{-1} \in \Homeo^+(S)$ such that $g^{-1}(\Sigma_{c}) \subset \Omega_c$. Consequently, we have $g(\Sigma_{c}) \subset \Omega_c$. Now, if $x\in \Sigma_c^{+}$, then $\m(x) \in \Sigma_c^{-}$, and hence
$$g\m g^{-1} \m (x) = g \m \m  h \m (\m(x)) =g h(x)=g f(x)=x.$$
Similarly, if $x\in \Sigma_c^{-}$, then $\m(x) \in \Sigma_c^{+}$, and hence $ g\m g^{-1} \m (x) = x.$ This shows that $g \m g^{-1}\m$ is identity on $\Sigma_c$, which establishes assertion (2).
	\end{proof}

\begin{theorem}\label{theorem unique max end}
Let $S$ be a surface in which every compact subsurface is displaceable and $S$ has a unique maximal end. Then $\Mod(S)$ has the twisted Rokhlin property.
\end{theorem}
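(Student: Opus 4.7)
My plan is to reduce the theorem to two cases using the structure of $\Out(\Mod(S))$. By Proposition \ref{outer auto mcg}, $\Out(\Mod(S)) \cong \mathbb{Z}_{2}$ is generated by the class of $\p$, so Proposition \ref{lem:twistrokh} reduces the task to verifying the $\phi$-twisted Rokhlin property for $\phi = \id$ and for $\phi = \p$. The first case is the ordinary Rokhlin property of $\Mod(S)$, which is Theorem 6.1 of \cite{MR4492497} (whose hypotheses match ours, since self-similarity of $\mathcal{E}(S)$ and genus zero or infinite follow from every compact subsurface being displaceable). For the remaining case $\phi = \p$, Proposition \ref{phi-TJEP} reduces the problem to verifying the $\p$-TJEP for $\Mod(S)$.

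To check the $\p$-TJEP I would take arbitrary basic open sets $\psi_{1} U_{A_{1}}$ and $\psi_{2} U_{A_{2}}$ and exhibit $g \in \Mod(S)$ with $\psi_{1} U_{A_{1}} \cap g \psi_{2} U_{A_{2}} \p(g)^{-1} \neq \emptyset$. First, Proposition \ref{prop:uniquemax}(1) lets me replace each $\psi_{i}$ by an element of $G$ in the same left coset. Then, using that the unique maximal end $\mu$ is fixed by the involution on $\mathcal{E}(S)$ induced by $\m$ and that $S$ is symmetric about the $xy$-plane, I would choose a separating simple closed curve $c$ with $\m(\Sigma_{c}) = \Sigma_{c}$ such that $\psi_{1}, \psi_{2}$ are supported on $\Sigma_{c}$ and the finite collection $A_{1} \cup A_{2} \cup \psi_{1}(A_{1}) \cup \psi_{2}(A_{2})$ is contained in $\Sigma_{c}$; the nested exhaustion of $S$ by symmetric separating curves approaching $\mu$ makes this possible.

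Now Proposition \ref{prop:uniquemax}(2) produces $g \in \Homeo^{+}(S)$ with $g(\Sigma_{c}) \subset \Omega_{c}$ and $g \m g^{-1} \m|_{\Sigma_{c}} = \id$, which rewrites as $\m g^{-1} \m|_{\Sigma_{c}} = g^{-1}|_{\Sigma_{c}}$; the inclusion $g(\Sigma_{c}) \subset \Omega_{c}$ also yields $g^{-1}(\Sigma_{c}) \subset \Omega_{c}$. My candidate is $v := g^{-1} \psi_{1} g \psi_{2} \in \Mod(S)$. Two short computations, both exploiting $\psi_{i}|_{\Omega_{c}} = \id$, then remain: first, $v \in \psi_{2} U_{A_{2}}$, via tracing $\psi_{2}^{-1} v(b)$ for $b \in A_{2}$ through $g\psi_{2}(b) \in g(\Sigma_{c}) \subset \Omega_{c}$, which is fixed by $\psi_{1}$; and second, $g v \p(g)^{-1} \in \psi_{1} U_{A_{1}}$, via the telescoping $g v \p(g)^{-1} = \psi_{1} g \psi_{2} \m g^{-1} \m$ followed by the fact that for $a \in A_{1}$ one has $\m g^{-1} \m(a) = g^{-1}(a) \in \Omega_{c}$ (fixed by $\psi_{2}$), after which $g$ cancels $g^{-1}$ and $\psi_{1}$ carries $a$ to $\psi_{1}(a)$.

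The principal technical obstacle is orchestrating the choice of $c$ so that $\m$-invariance of $\Sigma_{c}$, containment of the supports of the $\psi_{i}$, and containment of the finite collection of curves hold simultaneously; it is here that both the symmetric embedding hypothesis and the existence of a unique maximal end are genuinely used. Once $c$ and the resulting $g$ are in hand, the rest of the argument is the twisted conjugation computation outlined above.
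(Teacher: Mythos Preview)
Your proposal is correct and follows essentially the same route as the paper. Both reduce via Proposition~\ref{lem:twistrokh} and $\Out(\Mod(S))\cong\mathbb{Z}_2$ to verifying the $\p$-TJEP, use Proposition~\ref{prop:uniquemax}(1) to place the coset representatives in $G$, choose a $\m$-symmetric separating curve absorbing the supports and curve sets, and apply Proposition~\ref{prop:uniquemax}(2) to produce the same witness: your $g v \p(g)^{-1} = \psi_1\bigl(g\psi_2\p(g)^{-1}\bigr)$ is exactly the element the paper calls $h = h_1(gh_2\p(g)^{-1})$. The only cosmetic difference is that the paper verifies membership in the two open sets by observing that $h_1$ and $gh_2\p(g)^{-1}$ have disjoint supports (hence commute and fix each other's curve sets), whereas you trace individual curves through the composition; these are the same computation.
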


\begin{proof}
It follows from \cite[Theorem 2.2 and Lemma 4.3]{MR4492497} that $\Mod(S)$ has the Rokhlin property. Consequently, by Proposition \ref{lem:twistrokh},  $\Mod(S)$ has the $\phi$-twisted Rokhlin property for each inner automorphism $\phi$ of $\Mod(S)$. By Proposition \ref{outer auto mcg}, we have $\Out(\Mod(S)) \cong \mathbb{Z}_2$. Thus, in view of 
Proposition \ref{lem:twistrokh} and Proposition \ref{phi-TJEP}, it is enough to prove that $\Mod(S)$ has the $\p$-TJEP.
\par

Let $U_1$ and $U_2$ be non-empty open subsets of $\Mod(S)$. Without loss of generality, we assume that $U_1$ and $U_2$ are the basis elements of $\Mod(S)$. Thus, there exist finite subsets $A_1, A_2 \subset C(S)$ and $h_1, h_2 \in \Mod(S)$ such that $U_1 = h_1U_{A_1}$ and $U_2 = h_2U_{A_2}$. By Proposition \ref{prop:uniquemax}(1), the subgroup $G$ is dense in $\Mod(S)$. Thus, by Lemma \ref{lemma:h=f}, we can assume that $h_1,h_2 \in G$.  Since $h_1, h_2 \in G$, there exist separating simple closed curves $c_1,c_2$ on $S$ such that $h_1$ and $h_2$ admit representatives that restrict to the identity on $\Omega_{c_1}$ and $\Omega_{c_2}$, respectively. Next, we choose a separating simple closed curve $b$ such that $c_1, c_2 \subset \Sigma_b$ and $\m(\Sigma_b)=\Sigma_b$. Moreover, by enlarging $\Sigma_b$, if necessary, we can assume that each curve in $A_1 \cup A_2$ has a representative in $\Sigma_b$ and $\m (\Sigma_b) = \Sigma_b$. By Proposition \ref{prop:uniquemax}(2), there exists $g  \in \Homeo^+(S)$ such that $g(\Sigma_b) \subset \Omega_b$ and $g\m g^{-1}\m$ is identity on $\Sigma_b$. Note that $g(\Sigma_b) \subset \Omega_b$ implies that $g^{-1}(\Sigma_b) \subset \Omega_b$. We have 
 \begin{equation}\label{gu2phig}
gU_2\p(g)^{-1} = gh_2U_{A_2}\p(g)^{-1} = gh_2\p(g)^{-1}U_{\p(g)(A_2)}.
\end{equation}
 The element $h_1$ has a representative that restricts to the identity on $\Omega_b$. Similarly, since $g\m =\m g$ on $\Sigma_b$ and $g^{-1}(\Sigma_b) \subset \Omega_b$, it follows that the element $gh_2\p (g)^{-1}$ also has a representative that restricts to the identity on $\Sigma_b$. Thus, we have the following:
\begin{itemize}
\item $h_1$ and $gh_2\p (g)^{-1}$ commute on $S$,
\item $h_1 \in U_{\p(g)(A_2)}$ (since $\p (g)(A_2) \subset \p (g)(\Sigma_b) \subset  \Omega_b$),
\item $gh_2\p (g)^{-1} \in U_{A_1}$ (since $A_1 \subset \Sigma_b$).
\end{itemize}
 Taking $h = h_1(gh_2\p (g)^{-1})$, we see that $$h \in h_1U_{A_1} = U_1$$ and $$h \in gh_2\p(g)^{-1}U_{\p(g)(A_2)} = gU_2\p(g)^{-1}\quad (\textrm{by}~\eqref{gu2phig}).$$ 
  Thus, $h \in U_1 \cap gU_2 \p(g)^{-1}$, which completes the proof.
\end{proof}

\subsection{When the surface has exactly two maximal ends}
In this subsection, we assume that every compact subsurface of our surface $S$ is displaceable and $\mathcal{M}(S)$ has exactly two elements. We claim that $\Mod(S)$ does not have the $\phi$-twisted Rokhlin property for any $\phi \in \Aut(\Mod(S))$. It is enough to consider the automorphism $\p$ of $\Mod(S)$. Recall that, there is an action of $\Mod(S)$ on $\mathcal{E}(S)$ that keeps  $\mathcal{M}(S)$ invariant.

\begin{lemma}\label{lem:stab}
The stabiliser $\Stab(\mu)$ of a maximal end is a proper open normal subgroup of $\Mod(S)$ such that $h\Stab(\mu)\p(h)^{-1} = \Stab(\mu)$ for all $h \in \Mod(S)$.
\end{lemma}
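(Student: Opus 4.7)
The plan is to realize $\Stab(\mu)$ as the kernel of a natural homomorphism to $\mathbb{Z}/2$, obtain normality, openness, and properness from that description, and then verify the $\p$-twisted invariance by a short case analysis of how $\m$ acts on the two-element set $\mathcal{M}(S)$.

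First I would use the $\Mod(S)$-action on $\mathcal{E}(S)$, which preserves the finite subset $\mathcal{M}(S)=\{\mu,\mu'\}$, to produce a homomorphism $\rho\colon\Mod(S)\to\mathrm{Sym}\{\mu,\mu'\}\cong\mathbb{Z}/2$. Because $|\mathcal{M}(S)|=2$, an element fixes $\mu$ if and only if it fixes $\mu'$, so $\Stab(\mu)=\Stab(\mu')=\ker\rho$, which is normal of index at most two. Properness amounts to exhibiting an orientation-preserving mapping class swapping $\mu$ and $\mu'$; such an element is provided by the structural description of $\mathcal{M}(S)$ in case~(2) of Theorem~\ref{compact subsurface is displaceable}, where the displaceability of every compact subsurface forces the two maximal ends to lie in a common $\Mod(S)$-orbit. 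Openness is then automatic: $\Stab(\mu)$ is the preimage of the closed point $\{\mu\}$ under the continuous orbit map $h\mapsto h(\mu)$ and hence is closed, and a closed subgroup of finite index in a topological group is open because its complement is a finite union of closed cosets.

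For the $\p$-twisted invariance, normality of $\Stab(\mu)$ yields
$$
h\Stab(\mu)\p(h)^{-1}=h\Stab(\mu)h^{-1}\cdot h\p(h)^{-1}=\Stab(\mu)\cdot h\p(h)^{-1},
$$
so it suffices to check $h\p(h)^{-1}(\mu)=\mu$, equivalently $h(\mu)=\p(h)(\mu)=\m h\m(\mu)$ for every $h\in\Mod(S)$. Since $\m$ is a self-homeomorphism of $S$, it permutes the two-element set $\mathcal{M}(S)$ and therefore either fixes both maximal ends or swaps them. A short case analysis, recalling that $h$ itself permutes $\{\mu,\mu'\}$, handles all four subcases and establishes the required equality.

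The main obstacle is the properness step, namely the construction of an orientation-preserving mapping class swapping $\mu$ and $\mu'$; this is precisely where the hypothesis that every compact subsurface is displaceable is essential, via the classification in Theorem~\ref{compact subsurface is displaceable}. Once this is in hand, normality, openness, and the twisted invariance follow from formal arguments exploiting the finiteness of $\mathcal{M}(S)$ and the involutive action of $\m$ on it.
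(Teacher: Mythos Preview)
Your argument is correct and close in spirit to the paper's, but there are two points of divergence worth noting.

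First, for the \emph{proper open normal} part the paper simply cites \cite[Lemma~5.4]{MR4492497}. Your kernel-of-$\rho$ description is fine, but the step you flag as the main obstacle---producing a mapping class that swaps $\mu$ and $\mu'$---is not actually contained in Theorem~\ref{compact subsurface is displaceable}: case~(2) there only asserts $|\mathcal{M}(S)|=2$, not that the two ends are in the same $\Mod(S)$-orbit. That transitivity (and the openness) is precisely the content of the Lanier--Vlamis lemma, so in the end you are invoking the same external input as the paper, just less directly.

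Second, for the $\p$-twisted invariance your reduction to $h\p(h)^{-1}\in\Stab(\mu)$ via normality is correct, but the subsequent case analysis on whether $\m$ fixes or swaps $\{\mu,\mu'\}$ is unnecessary in this paper's setup. By the construction in Section~\ref{section 2}, the end space $\mathcal{E}(S)$ is embedded on the great circle in the $xy$-plane and $\m$ is reflection in that plane, so $\m$ fixes \emph{every} end; only your Case~A ever occurs. The paper exploits this directly: for $f=hf'\m h^{-1}\m$ with $f'\in\Stab(\mu)$ it computes
\[
f(\mu)=hf'\m h^{-1}\m(\mu)=hf'\m h^{-1}(\mu)=hf' h^{-1}(\mu)=\mu,
\]
using $\m(\mu)=\mu$, $\m(\mu')=\mu'$, and normality. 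Your version has the virtue of working for an arbitrary orientation-reversing involution representing the nontrivial outer class, but within the paper's conventions the shorter computation suffices.
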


\begin{proof}
It follows from \cite[Lemma 5.4]{MR4492497} that $\Stab(\mu)$ is a proper open normal subgroup of $\Mod(S)$. If $f \in h\Stab(\mu)\p (h)^{-1}$, then $f= h f' \m h^{-1} \m$ for some $f' \in \Stab(\mu)$. Since $\Stab(\mu)$ is normal,  we see that 
$$f(\mu)= h f' \m h^{-1} \m(\mu)=h f' \m h^{-1} (\mu)=h f' h^{-1} (\mu)=\mu.$$
Hence,  $h\Stab(\mu)\p(h)^{-1} \subseteq \Stab(\mu)$ for all $h \in \Mod(S)$. The converse inclusion can be proved in a similar way.
\end{proof}

\begin{proposition}\label{prop:2end}
Let $S$ be a  surface in which every compact subsurface is displaceable and $\mathcal{M}(S)$ has exactly two elements. Then $\Mod(S)$ does not have the $\phi$-twisted Rokhlin property for any $\phi \in \Aut(\Mod(S))$.
\end{proposition}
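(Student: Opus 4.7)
The plan is to reduce the proof to just two representative automorphisms and then invoke the machinery already developed. By Proposition \ref{outer auto mcg}, $\Out(\Mod(S)) \cong \mathbb{Z}_2$, so $\Aut(\Mod(S)) = \Inn(\Mod(S)) \sqcup \Inn(\Mod(S))\,\p$. Proposition \ref{lem:twistrokh} tells us that the $\phi$-twisted Rokhlin property depends only on the coset of $\phi$ modulo $\Inn(\Mod(S))$. Hence it suffices to rule out the $\phi$-twisted Rokhlin property for one representative of each coset, namely the identity (i.e.\ the Rokhlin property itself) and $\p$.

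For the inner-automorphism coset, Proposition \ref{lem:twistrokh} gives that the $\phi$-twisted Rokhlin property for any $\phi \in \Inn(\Mod(S))$ is equivalent to the ordinary Rokhlin property of $\Mod(S)$. By the classification of Lanier and Vlamis \cite[Theorem 6.1]{MR4492497}, the Rokhlin property for $\Mod(S)$ on a non-compact infinite-type surface without boundary forces the space of ends to be self-similar with a \emph{unique} maximal end. Since our hypothesis gives $|\mathcal{M}(S)| = 2$, the group $\Mod(S)$ does not have the Rokhlin property, and hence not the $\phi$-twisted Rokhlin property for any inner $\phi$.

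For the coset containing $\p$, the strategy is to exhibit a proper open subgroup that is preserved under the twisted conjugation associated with $\p$, and then invoke Proposition \ref{lem:open}. The subgroup $\Stab(\mu) \le \Mod(S)$, where $\mu \in \mathcal{M}(S)$ is a maximal end, is precisely such a candidate: Lemma \ref{lem:stab} already establishes that $\Stab(\mu)$ is a proper open subgroup of $\Mod(S)$ with $h\,\Stab(\mu)\,\p(h)^{-1} = \Stab(\mu)$ for every $h \in \Mod(S)$. Proposition \ref{lem:open} then concludes that $\Mod(S)$ does not have the $\p$-twisted Rokhlin property. Combining both cases with the reduction via Proposition \ref{lem:twistrokh} completes the proof.

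The argument is essentially a bookkeeping exercise once the preparatory results are in hand: the heavy lifting was done in Lemma \ref{lem:stab} (which uses the key fact that the mirror involution $\m$ fixes each of the two maximal ends, since $\m$ extends to a homeomorphism of the end space that must permute the two-element set $\mathcal{M}(S)$ in a way compatible with the fact that $\Stab(\mu)$ is normal) and in the citation of Lanier--Vlamis. The only step requiring any genuine attention is making sure the outer automorphism $\p$ truly represents the non-trivial class in $\Out(\Mod(S))$ in both subcases, so that Proposition \ref{lem:twistrokh} applies without ambiguity; this is precisely what was set up in the discussion surrounding Proposition \ref{outer auto mcg}, so no further work is needed.
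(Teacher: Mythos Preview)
Your proof is correct and follows essentially the same route as the paper: reduce via Proposition~\ref{lem:twistrokh} and Proposition~\ref{outer auto mcg} to the identity and $\p$, then use Lemma~\ref{lem:stab} together with Proposition~\ref{lem:open}. The only minor divergence is that for the inner coset you appeal to the Lanier--Vlamis classification \cite[Theorem~6.1]{MR4492497}, whereas the paper dispatches that case internally: Lemma~\ref{lem:stab} already records that $\Stab(\mu)$ is a proper open \emph{normal} subgroup, so the ``in particular'' clause of Proposition~\ref{lem:open} immediately kills the Rokhlin property without invoking the full external classification. Your final parenthetical about why $\m$ fixes $\mu$ is slightly muddled (it follows from the chosen symmetric embedding of $S$, not from normality of $\Stab(\mu)$), but since you are merely citing Lemma~\ref{lem:stab} rather than reproving it, this does not affect the validity of your argument.
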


\begin{proof}
The proof  follows directly from Proposition \ref{lem:twistrokh}, Proposition \ref{lem:open} and Lemma \ref{lem:stab}.
\end{proof}
\medskip

\subsection{When the space of maximal ends is a Cantor space}
This subsection deals with the case when every compact subsurface of $S$ is displaceable and $\mathcal{M}(S)$ is a Cantor space. The following result is a special case of \cite[Theorem 14.1]{MR2850125}.

\begin{proposition}\label{prop:anosov}
Let $S_{g,n}$ be a surface of genus $g$ with $n$ punctures and  let $\{a,b\}$ be a filling system of curves for $S_{g,n}$. Consider the subgroup  $\langle T_a, T_b \rangle$ of $\Mod(S)$
and the representation $\rho: \langle T_a, T_b \rangle \to PSL(2, \mathbb R)$ given by 
		$$
		\rho(T_a)= \left(\begin{matrix}
		1 & i(a,b) \\
		0 & 1
		\end{matrix}\right) \quad \textrm{and} \quad \rho(T_b )= \left(\begin{matrix}
		1 & 0 \\
		-i(a,b) & 1
		\end{matrix}\right).
		$$
Let  $f\in \langle T_a, T_b \rangle$ such that $\rho(f)$ is hyperbolic in $PSL(2,\mathbb R)$. Then $f$ is pseudo-Anosov.
\end{proposition}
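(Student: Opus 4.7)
The plan is to invoke Thurston's construction of pseudo-Anosov homeomorphisms from a pair of filling multicurves, which is the engine behind Theorem 14.1 of Farb--Margalit. The filling pair $\{a,b\}$ cuts $S_{g,n}$ into a disjoint union of topological disks (each one punctured or not). I would use these complementary regions to endow $S_{g,n}$ with a singular Euclidean (half-translation) structure $\mu$ in which $a$ becomes a union of horizontal geodesic segments and $b$ becomes a union of vertical ones, with singularities only at the intersection points of $a$ and $b$ and at the punctures. Concretely, each complementary disk is viewed as a Euclidean rectangle whose horizontal sides lie on $a$ and whose vertical sides lie on $b$; gluing these rectangles along the prescribed intersection pattern produces the flat structure, and the transverse measures are recorded by the numbers $i(a,b)$.

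Next I would check that $T_a$ and $T_b$ are realized as affine diffeomorphisms of $(S_{g,n},\mu)$ with derivatives in $PSL(2,\mathbb{R})$ given exactly by the formulas for $\rho(T_a)$ and $\rho(T_b)$. The point is that $T_a$ can be performed by the simultaneous horizontal shear of each vertical annulus component of a regular neighborhood of $a$; in the flat structure each such annulus has modulus determined by $i(a,b)$, and the shear that closes up globally has derivative $\bigl(\begin{smallmatrix} 1 & i(a,b) \\ 0 & 1 \end{smallmatrix}\bigr)$. The analogous statement for $T_b$ produces $\bigl(\begin{smallmatrix} 1 & 0 \\ -i(a,b) & 1 \end{smallmatrix}\bigr)$, with the sign coming from the left-handed convention for Dehn twists together with the orientation of the vertical foliation. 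Thus the representation $\rho$ is literally the derivative homomorphism of the affine action of $\langle T_a,T_b\rangle$ on $(S_{g,n},\mu)$.

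Given $f\in \langle T_a,T_b\rangle$ with $\rho(f)$ hyperbolic, I would then take a hyperbolic matrix $\rho(f)$ with eigenvalues $\lambda,\lambda^{-1}$ where $|\lambda|>1$, and its eigendirections in $\mathbb{R}^2$. Pulling these back under the developing map equips $S_{g,n}$ with two transverse measured singular foliations $(\mathcal{F}^u,\nu^u)$ and $(\mathcal{F}^s,\nu^s)$, whose singularities coincide with the cone points of $\mu$ (hence are of the type required for a pseudo-Anosov). Because $f$ is affine with derivative $\rho(f)$, it preserves these foliations as foliations and scales their transverse measures by $\lambda$ and $\lambda^{-1}$ respectively. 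This is exactly the definition of a pseudo-Anosov homeomorphism, so the conclusion follows.

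The main obstacle is the careful construction of the half-translation structure and verification that $T_a, T_b$ really are affine with the claimed derivatives; this is a bookkeeping issue about how the individual rectangle shears glue across the singularities and the punctures of $S_{g,n}$. Once this is in place, the passage from ``hyperbolic derivative'' to ``pseudo-Anosov'' is formal, since an affine homeomorphism of a half-translation surface inherits the dynamical type of its linear part. Since this is precisely Thurston's construction as developed in \cite{MR2850125}, I would be content to outline the above and cite Theorem 14.1 there for the technical details.
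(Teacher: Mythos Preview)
Your proposal is correct and aligns with the paper's own treatment: the paper does not give an independent proof of this proposition but simply records it as a special case of \cite[Theorem 14.1]{MR2850125}, which is precisely the Thurston construction you outline. Your sketch of the half-translation structure built from the filling pair, the identification of $\rho$ with the derivative homomorphism of the affine action, and the passage from a hyperbolic linear part to a pseudo-Anosov map is exactly the content of that theorem, so there is nothing to add.
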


The following lemma will be used in the main result of this subsection.

\begin{lemma}\label{pApermute}
Consider the surface $S_{0,6}$ such that it is symmetric about the $xy$-plane and all its punctures lie on the $xy$-plane. Then there exists a mapping class $f \in \Mod(S_{0,6})$ such that $f$ permutes the punctures cyclically and $f\m f\m$ is pseudo-Anosov.
\end{lemma}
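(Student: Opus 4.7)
Plan: I intend to construct $f$ directly as the composition of a rotation realizing the $6$-cycle and a high power of a Dehn twist along a carefully chosen curve, and then verify the pseudo-Anosov property by reducing to Proposition \ref{prop:anosov} and Proposition \ref{prop:anosovbound}.

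Realize $S_{0,6}$ concretely as the round $2$-sphere with six punctures placed symmetrically on the equator, so that $\m$ is the reflection in the equatorial $xy$-plane and fixes every puncture pointwise. Let $\sigma \in \Mod(S_{0,6})$ denote the isotopy class of the rotation by $2\pi/6$ about the polar axis. Then $\sigma$ cyclically permutes the punctures, and because this rotation commutes with $\m$, we have $\p(\sigma) = \sigma$. Choose an essential simple closed curve $c$ in general position so that $a := \sigma^{-1}(c)$ and $b := \m(c)$ form a filling pair on $S_{0,6}$; such $c$ exists because the filling condition is open and generic in $\mathcal{C}(S_{0,6})$. For a large integer $n$, set $f := \sigma\cdot T_c^{\,n}$. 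Since $T_c^{\,n}$ fixes every puncture, $f$ induces the $6$-cycle $\sigma$ on punctures, as required.

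A direct computation, using $\p(\sigma)=\sigma$ and $\p(T_c)=T_{\m(c)}^{-1}=T_b^{-1}$ from Lemma \ref{lem:twistprop1}, yields
\begin{align*}
f\cdot \p(f) \;=\; \sigma\, T_c^{\,n}\,\sigma\, T_b^{-n} \;=\; \sigma^{2}\cdot (\sigma^{-1}T_c^{\,n}\sigma)\cdot T_b^{-n} \;=\; \sigma^{2}\cdot T_a^{\,n}\cdot T_b^{-n}.
\end{align*}
Thus $f\m f\m$ differs from the element $\phi_n := T_a^{\,n}T_b^{-n}$ by left multiplication with the finite-order element $\sigma^{2}$. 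Since $\{a,b\}$ fills $S_{0,6}$, Proposition \ref{prop:anosov} (applied with the $\mathrm{PSL}(2,\mathbb R)$ representation sending $T_a\mapsto\bigl(\begin{smallmatrix}1&i(a,b)\\0&1\end{smallmatrix}\bigr)$ and $T_b^{-1}\mapsto\bigl(\begin{smallmatrix}1&0\\i(a,b)&1\end{smallmatrix}\bigr)$) implies that $\phi_n$ is pseudo-Anosov for every nonzero $n$.

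The remaining, and principal, step is to establish that $\sigma^{2}\phi_n$ itself remains pseudo-Anosov for $|n|$ sufficiently large. This is the main obstacle, because Proposition \ref{prop:anosov} directly applies only inside $\langle T_a,T_b\rangle$, whereas $\sigma^{2}\phi_n$ leaves this subgroup. My intended approach combines Proposition \ref{prop:anosovbound}, which guarantees that the displacement of any fixed curve under $\phi_n^{\,k}$ grows linearly in $k$, with a Thurston-style ping-pong argument on $\mathcal{PMF}(S_{0,6})$: for large $n$, the attracting and repelling foliations of $\phi_n$ lie arbitrarily close in $\mathcal{PMF}(S_{0,6})$ to the projective classes $[a]$ and $[b]$, which are not preserved by the finite-order $\sigma^{2}$ when $c$ is chosen generically. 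A standard perturbation argument then shows that the composition $\sigma^{2}\phi_n$ is pseudo-Anosov for every sufficiently large $n$, which produces the desired $f$.
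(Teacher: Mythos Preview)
Your argument has a real gap precisely where you flag it: the passage from ``$\phi_n:=T_a^{\,n}T_b^{-n}$ is pseudo-Anosov'' to ``$\sigma^2\phi_n$ is pseudo-Anosov for large $n$''. You gesture at Proposition~\ref{prop:anosovbound} and a ``Thurston-style ping-pong on $\mathcal{PMF}$'' followed by a ``standard perturbation argument'', but none of this is carried out, and none of it is supplied by the results available in the paper. Proposition~\ref{prop:anosovbound} gives linear growth of $d_{C}(\phi_n^{\,k}\gamma,\gamma)$ in $k$ for \emph{fixed} $n$; it says nothing about the translation length of $\phi_n$ as $n$ varies, nor about the dynamics of $\sigma^2\phi_n$. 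The claim that the laminations of $\phi_n$ converge to $[a],[b]$ in $\mathcal{PMF}$ and that a finite-order perturbation preserves north--south dynamics for large $n$ is plausible but requires genuine additional input (Thurston's construction in $\mathcal{PMF}$, compactness arguments, or acylindrical-hyperbolicity machinery), none of which is set up here. A secondary issue: asserting that the filling condition on $\{\sigma^{-1}(c),\m(c)\}$ is ``open and generic in $\mathcal{C}(S_{0,6})$'' is not an argument, since $\mathcal{C}(S_{0,6})$ is discrete; you must actually exhibit such a $c$.

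The paper avoids the obstacle altogether by an elementary explicit device. It picks a specific curve $c$ with the special property $r(c)=\m(c)=:d$, so that $r$ swaps $c$ and $d$ and hence $r^2$ fixes both. With $f=r\,T_cT_d^{-2}T_c^{3}T_d^{-4}$, one computes $f\m f\m=W\cdot r^2$ for an explicit word $W\in\langle T_c,T_d\rangle$; since $r^2$ commutes with $T_c,T_d$ and $r^6=1$, the cube $(f\m f\m)^3=W^3$ lies entirely in $\langle T_c,T_d\rangle$. Proposition~\ref{prop:anosov} then reduces the pseudo-Anosov check to verifying that a single explicit $2\times 2$ matrix has trace greater than $2$. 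The idea you are missing is not a general perturbation principle but this concrete choice of $c$ (with $r(c)=\m(c)$), which forces a power of $f\m f\m$ back into the twist subgroup where Proposition~\ref{prop:anosov} applies directly.
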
 

\begin{proof}
Let $b_1, \ldots, b_6$ be the punctures on $S_{0,6}$. Let $r$ be the order six homeomorphism of $S_{0,6}$ such that $r(b_i)=b_{i+1}$ for $1 \leq i \leq 5$ and $r(b_6) = b_1$. Let $\m$ denote a map that sends a point of $S_{0,6}$ to its  mirror image about the $xy$-plane, $c$ be the simple closed curve on $S_{0,6}$ as shown in Figure \ref{fig:curve} and $d= \m(c)$. Then $f = r T_c T_{d}^{-2} T_{c}^3 T_{d}^{-4}$ is an orientation-preserving homeomorphism of $S_{0,6}$. Note that $r(c) = \m (c)=d$ and $r \m = \m r$ on $S_{0,6}$. Furthermore,  $f(b_i) = b_{i+1}$ for $1 \leq i \leq 5$ and $f(b_6) = b_1$.
	\begin{figure}[H]
		\labellist
		\pinlabel $c$ at 110, 210
		\pinlabel $x$ at 350, 155
		\pinlabel $y$ at 170, 310
		\pinlabel $b_1$ at 90, -10
		\pinlabel $b_2$ at 230, -10
		\pinlabel $b_3$ at 330, 125
		\pinlabel $b_4$ at 220, 285
		\pinlabel $b_5$ at 90, 285
		\pinlabel $b_6$ at -13, 140
		\endlabellist
		\centering
		\includegraphics[scale=0.53]{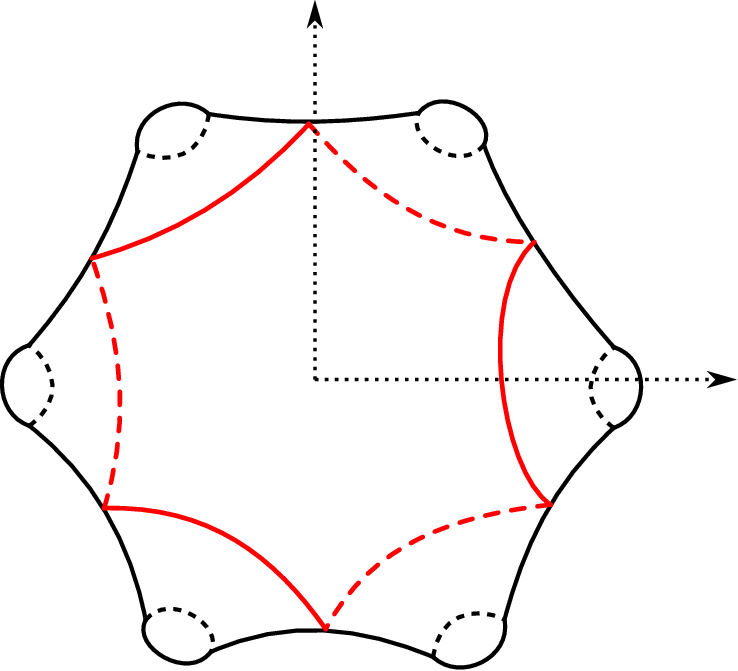}
		\caption{The curve $c$ on $S_{0,6}$ such that $\{ c, \m(c)\}$ fills $S_{0,6}$.}
		\label{fig:curve}
	\end{figure}
We see that
	\begin{align*}
	(f\m f\m)^3 &= (r T_c T_{d}^{-2} T_{c}^3 T_{d}^{-4} r T_{d}^{-1} T_{c}^{2} T_{d}^{-3} T_{c}^{4})^3, ~~\text{since}~~  r \m = \m r \text{ and } \m T_c \m = T_{\m (c)}^{-1}\\
	& = (T_{d} T_{c}^{-2} T_{d}^{3} T_{c}^{-4}T_{d}^{-1} T_{c}^{2} T_{d}^{-3} T_{c}^{4} r^2)^3,~~\text{since}~~ rT_c {r}^{-1} = T_{r(c)}= T_d\\
	& = (T_{d} T_{c}^{-2} T_{d}^{3} T_{c}^{-4}T_{d}^{-1} T_{c}^{2} T_{d}^{-3} T_{c}^{4})^3.
	\end{align*}
Since $i(c,d)=6$, by Proposition~\ref{prop:anosov}, we have $$
	\rho(T_c)=\left(\begin{matrix}
	1 & 6 \\
	0 & 1
	\end{matrix}\right) \quad \textrm{and} \quad \rho(T_d)=\left(\begin{matrix}
	1 & 0 \\
	-6 & 1
	\end{matrix}\right).
	$$
This gives
$$
\rho((f\m f\m)^3)= \left(\begin{matrix}
	-6768504263453271167830031 & -162818421281863319251229760 \\
	41172474347137850845138560 & 990416347916847555740563729
	\end{matrix}\right),$$
	which is hyperbolic since the trace of $\rho \big((f\m f\m)^3 \big)$ is greater than 2. Hence, $f\m f\m$ is a pseudo-Anosov map.
\end{proof}

\begin{theorem}\label{thm:cantor}
Let $S$ be a surface in which every compact subsurface is displaceable and $\mathcal{M}(S)$ is homeomorphic to a Cantor space. Then $\Mod(S)$ does not have the $\phi$-twisted Rokhlin property for any $\phi\in \Aut(\Mod(S))$.
\end{theorem}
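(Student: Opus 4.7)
My plan is to apply Proposition \ref{phi-TJEP} by exhibiting non-empty open sets $U_1, U_2 \subset \Mod(S)$ such that $U_1 \cap g U_2 \p(g)^{-1} = \emptyset$ for every $g \in \Mod(S)$, which will show that $\Mod(S)$ lacks the $\p$-twisted Rokhlin property. Since $\Out(\Mod(S)) \cong \mathbb{Z}_2$ with non-trivial coset represented by $\p$ (Proposition \ref{outer auto mcg}), Proposition \ref{lem:twistrokh} then reduces the theorem to showing additionally that $\Mod(S)$ lacks the ordinary Rokhlin property (the $\phi = \id$ case), which is a direct consequence of \cite{MR4492497} in the Cantor case.

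Since $\mathcal{M}(S)$ is a Cantor space preserved by the involution $\m$, I will first select a $\m$-symmetric six-element subset of $\mathcal{M}(S)$ and, in the symmetrization spirit of Proposition \ref{prop:uniquemax}(2), construct a $\m$-symmetric finite-type subsurface $\Sigma \subset S$ of type $S_{0,6}$ whose six punctures are neighborhoods of the chosen maximal ends and whose outer separating boundary curve $c$ is $\m$-fixed. Lemma \ref{pApermute} will then produce $f \in \Mod(S)$ supported on $\Sigma$ with $F := \Phi(f) = f \m f \m$ pseudo-Anosov on $\Sigma$. A key algebraic observation is the identity $\Phi(g x \p(g)^{-1}) = g \Phi(x) g^{-1}$ for $\Phi(x) := x\p(x)$, which follows from $\p^2 = \id$ and says that $\Phi$ sends $\p$-twisted conjugacy classes into ordinary conjugacy classes.

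Next I will take $U_1 = U_A$ and $U_2 = F^N U_A$, where $A \subset C(S)$ is a finite $\m$-invariant set containing $c$ together with a filling system of $\Sigma$ whose pointwise stabilizer in $\Mod(\Sigma, \partial\Sigma)$ is trivial, and $N$ will be chosen sufficiently large. Suppose $h \in U_1 \cap g U_2 \p(g)^{-1}$ and write $h = g F^N u\, \p(g)^{-1}$ with $u \in U_A$. The conditions $h(a) = a$ and $\m(a) = a$ for $a \in A$ combine to force $\Phi(h)(a) = a$. The filling hypothesis forces $u|_\Sigma = \id$, so the key identity yields $\Phi(h)|_{g(\Sigma)} = g (F^N \p(F)^N) g^{-1}|_{g(\Sigma)}$. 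A computation parallel to the one in Lemma \ref{pApermute} (passing to a suitable power to absorb the rotation factor and then applying the representation $\rho$ of Proposition \ref{prop:anosov}) should show that $F^N \p(F)^N$ is pseudo-Anosov on $\Sigma$ for every $N \neq 0$. Combining Proposition \ref{prop:anosovbound} with Lemma \ref{prop:projectionbound} then gives a displacement of at least $cN - O(1)$ on the relevant projections to $C(\Sigma)$, contradicting the fixed-curve constraint for $N$ sufficiently large.

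The hard part will be making this argument uniform in $g$, particularly when $g$ maps the curves of $A$ entirely off $\Sigma$ so that no relevant curve projects non-trivially to $C(\Sigma)$. I plan to address this using the Cantor structure of $\mathcal{M}(S)$: either by enlarging $A$ to include curves whose $g$-images necessarily intersect $\Sigma$ essentially for every $g$, or by running the analogous pseudo-Anosov analysis on $g^{-1}(\Sigma)$, itself another $\m$-symmetric six-punctured subsurface of $S$, and transferring the bound via Lemma \ref{prop:projectionbound}. In either case, the linear-in-$N$ Anosov growth should dominate the bounded projection error for $N$ large, closing the argument.
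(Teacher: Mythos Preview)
Your proposal has a genuine gap precisely at the point you flag as ``the hard part,'' and neither of your proposed fixes works. Since every compact subsurface of $S$ is displaceable by hypothesis, for \emph{any} finite set $A \subset C(S)$ there is a compact subsurface $K$ containing $A \cup \Sigma$, and a homeomorphism displacing $K$ off itself; this produces $g$ with $g(\Sigma)$ disjoint from every curve in $A$, so your first fix (enlarging $A$) is impossible in principle. Your second fix---running a pseudo-Anosov analysis on $g^{-1}(\Sigma)$---does not go through either: the element $F^N\p(F)^N$ is only known to be pseudo-Anosov on $\Sigma$, and conjugating by $g$ transports that structure to $g(\Sigma)$, not to $g^{-1}(\Sigma)$; moreover $g^{-1}(\Sigma)$ has no reason to be $\m$-symmetric, so the computation you sketch cannot be repeated there.

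The paper's proof resolves exactly this difficulty using an ingredient you cite but do not exploit: the $f$ produced by Lemma~\ref{pApermute} \emph{cyclically permutes} the six boundary components $b_1,\ldots,b_6$ of $\Sigma$ (it is not ``supported on $\Sigma$'' as you write). One then takes $U = fU_A$ (not $U_A$) and $V = U_{\{b_1\}}$ for the single boundary curve $b_1$. Every $h \in fU_A$ inherits the cyclic permutation of the complementary regions $\Omega_1,\ldots,\Omega_6$, so $h(\Omega_i) \cap \Omega_i = \emptyset$ for each $i$. When $g(b_1)$ is disjoint from $\Sigma$ it lies in some $\Omega_i$; using $\m(\Omega_i)=\Omega_i$ and $\m(b_1)=b_1$, the constraint $h(\p(g)(b_1))=g(b_1)$ forces $h(\Omega_i)\cap\Omega_i\neq\emptyset$, a contradiction. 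The intersecting case then proceeds via the projection/pseudo-Anosov argument essentially as you outline, but applied directly to $h\m h\m$ (which equals $f\m f\m$ on $\Sigma$) rather than to the more complicated $F^N\p(F)^N$. The cyclic-permutation idea is the missing piece; once you have it, the large parameter $N$ and the map $\Phi$ become unnecessary.
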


\begin{proof}
By Proposition \ref{lem:twistrokh} and \cite[Lemma 4.8]{MR4492497}, it is enough to prove that $\Mod(S)$ does not have the $\p$-twisted Rokhlin property. By results in \cite[Proposition 4.6]{MR4492497} (or \cite[Lemma 4.18]{MannRafi}), we choose a compact subsurface $\Sigma$ of $S$ such that
	\begin{enumerate}
		\item $\Sigma$ is planar,
		\item $\Sigma$ has six boundary components, say $b_1, b_2, b_3, b_4, b_5,b_6,$
		\item each $b_i$ is a separating curve of $S$,
		\item $\Omega_1, \Omega_2, \Omega_3, \Omega_4, \Omega_5, \Omega_6$ are the six components of $S \setminus \Sigma$ such that $\Omega_i^*$ is homeomorphic to $\mathcal{E}(S)$,
\item $\m (\Sigma) = \Sigma, ~ \m (b_i) = b_i$ and $\m (\Omega_i) = \Omega_i$ for each $1 \leq i \leq 6$.
\end{enumerate}
By Lemma~\ref{pApermute}, let $f^\prime \in \Homeo^{+}(\Sigma)$ such that $f^\prime(b_i) = b_{i+1}$ for $1 \leq i \leq 5$ and $f^\prime(b_6) = b_1$ and $f'\m f'\m$ is pseudo-Anosov. Using condition (4), there is an extension $f \in \Mod(S)$ of $f'$.
\par 
Recall that a subset $A$ of $C(S)$ is called a {\it stable Alexander system} if $U_A$ is the center of $\Mod(S)$.  Choose a stable Alexander system $A$ for $\Sigma$. Let us take $U = fU_A$ and $V = U_{\{b_1\}}$. We claim that $U \cap g V \p(g)^{-1} = \emptyset$ for all $g \in \Mod(S)$. For an element $g \in \Mod(S)$, we consider the following cases:
\par
Case 1: Suppose that $g(b_1)$ has a representative which is disjoint from $\Sigma$. In this case, $g(b_1) \in \Omega_i$ for some $i$. If $h \in g V\p(g)^{-1}$, then using condition (5), we obtain $\p(g)(b_1) \in \Omega_i$  and $h(\p(g)(b_1)) = g(b_1) \in \Omega_i$. Hence, we have $h(\Omega_i) \cap \Omega_i \neq \emptyset$. And, if $h \in U$, then $h(\Omega_i) \cap \Omega_i = \emptyset$ since $h(b_i) = b_{i+1}$. This proves that $U  \cap g V\p(g)^{-1} = \emptyset$.
\par

Case 2: Suppose that $g(b_1)$ has a non-trivial geometric intersection with $\Sigma$. Let $c \in C(\Sigma)$ be a projection of $g(b_1)$ onto $\Sigma$. If $h \in U \cap g V \p(g)^{-1}$, then $h\m g(b_1)= h(\p(g)(b_1)) = g(b_1)$. This gives  $$h\m h\m g(b_1)= h \m g(b_1) = g(b_1),$$  and consequently $(h\m h\m)^n (g(b_1)) = g(b_1)$ for all $n \in \mathbb{N}$. Since $h \in U$, it follows that $(h\m h\m)^n (c) \in C(\Sigma)$ is also a projection of $g(b_1)$ onto $\Sigma$ for all $n\in \mathbb N$. Hence, by  Lemma \ref{prop:projectionbound}, we have 
\begin{equation}\label{eqn:cantor1}
d_{C(\Sigma)}((h\m h\m)^n (c),c) \leq 2
\end{equation}
for all $n \in \mathbb N$. Since $h \in U$, we have $h\m h\m = f \m f \m$ on $\Sigma$, and hence the map $h\m h\m$ is pseudo-Anosov on $\Sigma$.  By Proposition \ref{prop:anosovbound}, there exists $m\in \mathbb N$ such that
 \begin{equation*}
d_{C(\Sigma)}\big((f\m f\m)^m(c), c \big) > 2 
 \end{equation*}
 for all $c \in C(\Sigma)$. This contradicts \eqref{eqn:cantor1}. Hence, we must have $U \cap g V \p(g)^{-1} = \emptyset$ for all $g \in \Mod(S)$. Hence, by Proposition \ref{phi-TJEP}, the proof of the theorem is complete.
\end{proof}
\medskip

\subsection{When the surface admits a compact non-displaceable subsurface}
Finally, we consider the case when $S$ admits a compact non-displaceable subsurface.

\begin{lemma}\label{lemma:fills1}
Consider the surface $S_{g,n}$ of genus $g \ge 0$ with $n$ punctures, where $n> 5$ is even when $g=0$. Suppose that $S_{g,n}$ is symmetric about the $xy$-plane with all the punctures lying on the $xy$-plane. Then there exists an essential simple closed curve $c$ on $S_{g,n}$ such that the collection $\{c, \m(c)\}$ fills the surface.
Moreover, there exists an orientation-preserving homeomorphism $f$ of $S_{g,n}$ such that $f\m f\m$ is pseudo-Anosov.
\end{lemma}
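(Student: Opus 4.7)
The plan is to mirror the construction in Lemma \ref{pApermute} in two stages: first exhibit an essential simple closed curve $c$ on $S_{g,n}$ with $\{c, \m(c)\}$ filling, and then build $f$ as an explicit word in the Dehn twists $T_c$ and $T_d$ (where $d = \m(c)$) and verify pseudo-Anosov-ness through the representation $\rho$ of Proposition \ref{prop:anosov}.

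For the filling curve, I would place the punctures symmetrically on the equator of the embedded surface and, when $g \ge 1$, situate the handles symmetrically about the $xy$-plane as well. For $g = 0$ and $n \ge 6$ even, I would draw $c$ as a curve weaving between alternating punctures on the upper hemisphere of $S_{0,n}$, analogous to the one in Figure \ref{fig:curve} but extended to accommodate more punctures; this guarantees that $c \cup \m(c)$ chops the sphere into disks, each of which is either an actual disk or contains at most one puncture. For $g \ge 1$, I would let $c$ additionally cross each handle once on its way around the equator, ensuring that the complement of $c \cup \m(c)$ in each handle region consists of disks. In both cases the resulting $c$ is essential (it neither bounds a disk, nor a punctured disk, nor an annulus), and $i(c, \m(c)) \ge 6$. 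Verifying that the specific picture I draw actually fills requires Euler-characteristic bookkeeping, namely counting vertices, edges, and faces of the $4$-valent graph $c \cup \m(c)$ on $S_{g,n}$ and checking each complementary region is a disk or once-punctured disk.

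For the pseudo-Anosov claim, set $f = T_c T_d^{-2} T_c^3 T_d^{-4}$ (the same word used in Lemma \ref{pApermute}, now without the permuting rotation $r$, which is no longer needed). By Lemma \ref{lem:twistprop1}, because $\m$ is orientation-reversing and $\m(c)=d$, $\m(d)=c$, we have $\m T_c \m = T_d^{-1}$ and $\m T_d \m = T_c^{-1}$. Hence $\m f \m = T_d^{-1} T_c^{2} T_d^{-3} T_c^{4}$, and $f \m f \m$ is a word in $T_c^{\pm 1}, T_d^{\pm 1}$ alone. Setting $N = i(c, \m(c))$, Proposition \ref{prop:anosov} gives the representation $\rho: \langle T_c, T_d\rangle \to \mathrm{PSL}(2,\mathbb{R})$ defined by replacing $i(a,b)$ by $N$ in the matrices. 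A direct matrix computation (mimicking the one in Lemma \ref{pApermute}, which is the case $N=6$) shows that $\rho\bigl((f \m f\m)^3\bigr)$ has trace of absolute value strictly greater than $2$ for every $N \ge 6$, so it is hyperbolic. Proposition \ref{prop:anosov} then forces $(f\m f\m)^3$, and therefore $f\m f\m$, to be pseudo-Anosov on $S_{g,n}$.

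The main obstacle is the explicit geometric construction of $c$ across the full range of $(g,n)$, particularly verifying the filling property by hand in each case; the algebraic stage is essentially a transplant of the computation already carried out in Lemma \ref{pApermute}, since the same word $f$ and the same cubing trick work uniformly once one knows the relevant intersection number $N \ge 6$ and the identities $\m T_c \m = T_d^{-1}$, $\m T_d \m = T_c^{-1}$. A secondary subtlety is to confirm that the curve we choose is genuinely essential, which for $g=0$ uses $n \ge 6$ (so that neither complementary region of $c$ alone is a once-punctured disk or an annulus) and for $g\ge 1$ uses the presence of at least one handle traversed by $c$.
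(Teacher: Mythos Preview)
Your plan for the filling curve is essentially the paper's: construct an explicit $c$ case by case (the paper draws one picture for $g \geq 1$ and points to the curve of Figure~\ref{fig:curve} for $g = 0$), and your added Euler-characteristic bookkeeping is a reasonable way to certify filling.

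For the pseudo-Anosov claim, however, the paper takes a much shorter route: set $f = T_c$, so that $f\m f\m = T_c\, T_{\m(c)}^{-1}$, and invoke Penner's construction \cite[Theorem 14.4]{MR2850125} directly---a product of positive twists along one curve and negative twists along the other in a filling pair is automatically pseudo-Anosov. This avoids any matrix computation and works uniformly in $(g,n)$ without tracking $N = i(c,\m(c))$.

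Your route via Proposition~\ref{prop:anosov} is workable but has a slip: once you drop the rotation $r$ from the word in Lemma~\ref{pApermute}, the resulting
\[
f\m f\m \;=\; T_c\, T_d^{-2}\, T_c^{3}\, T_d^{-5}\, T_c^{2}\, T_d^{-3}\, T_c^{4}
\]
is \emph{not} the word whose trace was computed there (that computation used the conjugation by $r$ and then cubing to eliminate $r$), so you cannot cite that calculation as ``the case $N = 6$''. You would need a fresh trace computation and then an argument that the resulting polynomial in $N$ exceeds $2$ in absolute value for every $N$ that actually arises as $i(c,\m(c))$ across all $(g,n)$. Ironically, your word above is already in Penner form---all $T_c$ exponents positive, all $T_d$ exponents negative---so Penner's construction disposes of it immediately without any trace computation. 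The paper's choice $f = T_c$ simply makes this observation as short as possible.
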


\begin{proof}
If $g>0$, then the curve $c$ as shown in Figure \ref{fig:fill} together with $\m(c)$ fills the surface $S_{g,n}$.

\begin{figure}[H]
	\centering
	\includegraphics[scale=0.53]{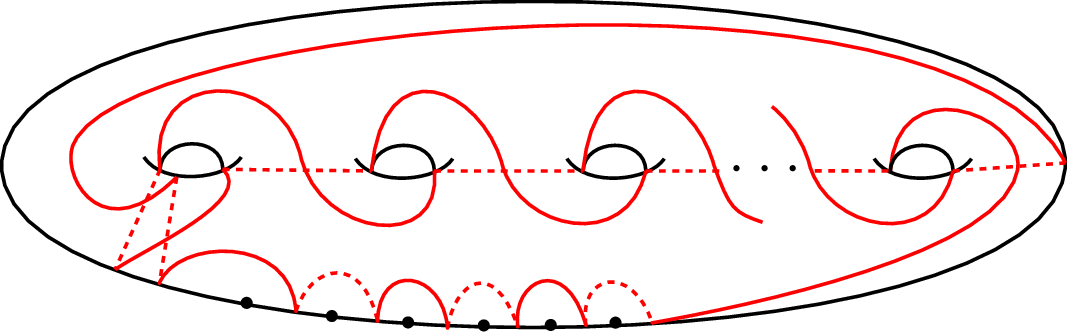}
	\caption{The curve $c$ on $S_{g,n}$ such that $\{ c,\m(c) \}$ fills $S_{g,n}$.}
	\label{fig:fill}
\end{figure}
If $g=0$, then choose the simple closed curve $c$ to be as depicted in Figure \ref{fig:curve} for $S_{0,6}$. Take $f = T_c$, where $c$ is the curve as above. By using Penner's construction \cite[Theorem 14.4]{MR2850125}, the homeomorphism $f\m f\m = T_c T_{\m(c)}^{-1}$ is pseudo-Anosov since the collection $\{c, \m(c)\}$ fills $S_{g,n}$. 
\end{proof}

\begin{theorem}\label{thm:non-displaceable}
Let $S$ be a surface containing a compact non-displaceable subsurface. Then $\Mod(S)$ does not have the $\phi$-twisted Rokhlin property for any $\phi\in \Aut(\Mod(S))$.
\end{theorem}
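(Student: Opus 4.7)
The plan is to mirror the structure of Theorem \ref{thm:cantor}, replacing the planar six-boundary subsurface used there with the given compact non-displaceable subsurface. First, by Proposition \ref{outer auto mcg} and Proposition \ref{lem:twistrokh} it suffices to check $\phi=\id$ and $\phi=\p$, since $\Out(\Mod(S))\cong\mathbb Z_2$. The case $\phi=\id$ is the Rokhlin property, which is known to fail under our hypothesis by the work of Lanier and Vlamis \cite{MR4492497}. For the case $\phi=\p$, by Proposition \ref{phi-TJEP} it is equivalent to disprove the $\p$-TJEP.

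I next fix a compact non-displaceable subsurface $\Sigma$ of $S$ and enlarge it if needed; enlargements of non-displaceable subsurfaces remain non-displaceable, so I may assume $\m(\Sigma)=\Sigma$, every boundary component of $\Sigma$ is an essential separating curve of $S$, and $\Sigma$ has enough genus or punctures to satisfy the hypothesis of Lemma \ref{lemma:fills1}. That lemma then provides a simple closed curve $c\subset\Sigma$ with $\{c,\m(c)\}$ filling $\Sigma$, together with $f'=T_c$ for which $f'\m f'\m=T_cT_{\m(c)}^{-1}$ is pseudo-Anosov on $\Sigma$. Extending $f'$ by the identity on $S\setminus\Sigma$ gives $f\in\Mod(S)$, and I fix a stable Alexander system $A$ for $\Sigma$ containing $\partial\Sigma$.

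I then propose to take $U=fU_A$ and $V=U_{\{c,\m(c)\}\cup\partial\Sigma}$, and show $U\cap gV\p(g)^{-1}=\emptyset$ for every $g\in\Mod(S)$. Suppose $h=gv\p(g)^{-1}$ with $v\in V$. Since $\p(g)=\m g\m$, one has $h\m=gv\m g^{-1}$, so a short calculation using $v(c)=c$, $v(\m(c))=\m(c)$ and $\m^2=\id$ yields $h\m(g(c))=g(\m(c))$ and $h\m(g(\m(c)))=g(c)$; thus $h\m$ interchanges $g(c)$ and $g(\m(c))$, and by the same reasoning swaps $\m$-paired components of $g(\partial\Sigma)$. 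It follows that $(h\m h\m)^n$ fixes each of $g(c)$, $g(\m(c))$ and every curve in $g(\partial\Sigma)$ in $C(S)$ for all $n\in\mathbb N$. Meanwhile, the condition $h\in fU_A$ forces $(h\m h\m)|_\Sigma=(f'\m f'\m)|_\Sigma$, which is pseudo-Anosov on $\Sigma$.

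The hard part is a geometric claim I regard as the heart of the argument: because $\Sigma$ is non-displaceable and the collection $\{c,\m(c)\}\cup\partial\Sigma$ both fills and bounds $\Sigma$, for every $g$ at least one of the curves $g(c), g(\m(c)), g(\partial\Sigma)$ must have non-trivial geometric intersection with $\Sigma$. Otherwise, one could simultaneously isotope all of these curves into $S\setminus\Sigma$, and since this collection determines $g(\Sigma)$ up to isotopy as the subsurface that it fills and bounds, the whole of $g(\Sigma)$ would become isotopic into $S\setminus\Sigma$, contradicting non-displaceability. Granting this claim, I let $b\in C(\Sigma)$ be a projection of whichever of the curves meets $\Sigma$; then Lemma \ref{prop:projectionbound} yields $d_{C(\Sigma)}((h\m h\m)^n b,\, b)\le 2$ for every $n\in\mathbb N$, whereas Proposition \ref{prop:anosovbound} applied to the pseudo-Anosov $(h\m h\m)|_\Sigma$ produces some $n$ with $d_{C(\Sigma)}((h\m h\m)^n b,\, b)>2$, delivering the desired contradiction and thereby disproving the $\p$-TJEP.
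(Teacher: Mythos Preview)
Your argument follows the same architecture as the paper's: reduce to $\p$ via $\Out(\Mod(S))\cong\mathbb Z_2$ and \cite{MR4492497}, enlarge the non-displaceable $\Sigma$ so that $\m(\Sigma)=\Sigma$ and Lemma~\ref{lemma:fills1} applies, extend the resulting $f'$ to $f\in\Mod(S)$, take $U=fU_A$ for a stable Alexander system $A$ of $\Sigma$, and derive a contradiction between Lemma~\ref{prop:projectionbound} and Proposition~\ref{prop:anosovbound} applied to the pseudo-Anosov $(f'\m f'\m)$.

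The one substantive difference is your choice of $V$. The paper simply takes $V=U_A$ as well. Because $A$ is a stable Alexander system for $\Sigma$, any $v\in U_A$ restricts to the identity on $\Sigma$, so the identity $h\m h\m(g(a))=g(a)$ holds for \emph{every} $a\in C(\Sigma)$, not just for a few chosen curves. Non-displaceability then directly supplies some $a\in C(\Sigma)$ with $g(a)$ projecting non-trivially to $\Sigma$, and the projection argument runs without further geometric input. Your choice $V=U_{\{c,\m(c)\}\cup\partial\Sigma}$ only yields $(h\m h\m)$-invariance of $g(c),g(\m(c)),g(\partial\Sigma)$, which forces you into your ``hard part'': showing that one of these specific curves must meet $\Sigma$ essentially. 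Your justification (simultaneously isotope the curves off $\Sigma$, then the subsurface they fill and bound follows) is morally right but skips two genuine steps---passing from individual to simultaneous disjointness, and controlling the complementary disks/annuli of the filling---that would need real work. Replacing your $V$ by $U_A$ makes this entire detour unnecessary and gives exactly the paper's proof.
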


\begin{proof}
In view of Proposition \ref{lem:twistrokh} and \cite[Theorem 3.1]{MR4492497}, it is enough to prove that  $\Mod(S)$ does not have the $\p$-twisted Rokhlin property. 
Let $\Sigma$ be a compact non-displaceable subsurface of $S$. By enlarging $ \Sigma $, we can ensure that it is connected, has a stable Alexander system, satisfies $\m(\Sigma)=\Sigma$ and its boundary $\partial(\Sigma)$ consists of at least six components with an even number of boundary components, each being essential and separating. Furthermore, $\Sigma$ remains a finite-type subsurface of $S$. 
\par

We claim that there exist open subsets $U$ and $V$ of $\Mod(S)$ such that $U \cap g V\p(g)^{-1}=\emptyset$ for all $g \in \Mod(S)$. Let $A \subset C(S)$ be a stable Alexander system for $\Sigma$. It follows from the proof of Lemma \ref{lemma:fills1} that there is an orientation-preserving homeomorphism $f': \Sigma \to \Sigma$ which fixes $\partial(\Sigma)$ point-wise such that $f'\m f'\m$ is pseudo-Anosov. Thus, by Proposition \ref{prop:anosovbound}, there exists $m\in \mathbb N$ such that
 \begin{equation}\label{eq2}
d_{C(\Sigma)}\big((f'\m f'\m)^m(a), a \big) > 2 
 \end{equation}
 for all $a \in C(\Sigma)$. Let $f \in \Mod(S)$ denote the extension of $f'$. Take $U=fU_A$ and $V=U_A$. Suppose that there exists $g \in \Mod(S)$ such that $h \in f U_A \cap gU_A\p(g)^{-1}$. Since $h \in gU_A\p (g)^{-1}=g\p(g)^{-1}U_{\p(g)(A)}$, we have
$$
h \m g \m(c)= g(c)
$$
for all $c \in A$. Thus, we obtain
\begin{equation}\label{eq1}
h \m h \m (g(c)) = h \m h \m g \m \m(c) = h \m g (\m(c)) = g(c)
\end{equation}
for all  $c \in A$. Since $\Sigma$ is non-displaceable in $S$, there exists a curve $c \in C(\Sigma)$ such that $g(c)$ and $\Sigma$ have a non-trivial geometric intersection. Let $b$ denote a projection of $g(c)$ onto $\Sigma$. By \eqref{eq1}, 
$(h\m h \m)^n (b)$ is also a projection of $g(c)$ onto $(h \m h \m)^n (\Sigma)=\Sigma$. By Lemma \ref{prop:projectionbound}, we have
$$
d_{C(\Sigma)}((h\m h \m)^n (b), b)\leq 2
$$
for all $n\in \mathbb N$. Since $h \in f U_A$,  by \eqref{eq2}, we have
$$
d_{C(\Sigma)}((h\m h \m)^m (b), b) = d_{C(\Sigma)}((f\m f \m)^m (b), b) >2
$$
for some $m$, which is the contradiction. Hence, we conclude that $fU_A \cap gU_A\p(g)^{-1}=\emptyset$ for all $g \in \Mod(S)$. Hence, by Proposition \ref{phi-TJEP}, the proof of the theorem is complete.
\end{proof}
\medskip

The outcomes from this section lead to the following theorem.

\begin{theorem}
The mapping class group $\Mod(S)$ of a connected and orientable surface $S$ without boundary has the $\phi$-twisted Rokhlin property for some automorphism $\phi$ of $\Mod(S)$  if and only if the surface is either the $2$-sphere or satisfy the property that every compact subsurface of $S$ is displaceable and $S$ has a unique maximal end.
\end{theorem}

\begin{proof}
The mapping class group of the $2$-sphere obviously satisfies the desired property being a trivial group. If the surface $S$ is such that every compact subsurface of $S$ is displaceable and $S$ has a unique maximal end, then by Theorem~\ref{theorem unique max end}, it has the twisted Rokhlin property, and hence $\phi$-twisted Rokhlin property for each $\phi$. Conversely, suppose that $\Mod(S)$ has the  $\phi$-twisted Rokhlin property for some automorphism $\phi$ of $\Mod(S)$. If $S$ is compact, then it must be the 2-sphere. If $S$ is non-compact, then by Theorem~\ref{thm:non-displaceable}, every compact subsurface of $S$ must be displaceable. Finally, by Theorem~\ref{compact subsurface is displaceable}, Theorem~\ref{theorem unique max end}, Proposition~\ref{prop:2end}, and Theorem~\ref{thm:cantor}, $S$ must satisfy the property that every compact subsurface of $S$ is displaceable and $S$ has a unique maximal end.
\end{proof}
\medskip

\section{$R_{\infty}$-property of big mapping class groups} 
In this section, we investigate the number of twisted conjugacy classes in mapping class groups.

\begin{definition}
Let $G$ be a group and $\phi \in \Aut(G)$. The {\it Reidemeister number} of $\phi$, denoted by $R(\phi)$, is the number of $\phi$-twisted conjugacy classes in $G$. The group $G$ is said to have the {\it $R_\infty$-property} if $R(\phi)$ is infinite for each automorphism $\phi$ of $G$.
\end{definition}

The $R_{\infty}$-property for the mapping class groups of finite-type surfaces has been investigated in \cite[Theorem 4.3]{MR2644279}. It has been proven that the mapping class group of a closed orientable surface has the $R_{\infty}$-property if and only if the surface is not the 2-sphere. A partial result for infinite-type surfaces has been established in \cite[Theorem 3.4]{MR4525669}, for instance,  it is proven that if a connected orientable infinite-type surface without boundary admits a non-displaceable compact subsurface, then its mapping class group possesses the $R_{\infty}$-property. The aim of this section is to provide a complete answer for infinite-type surfaces through a general approach.
\par

Let $S$ be a connected orientable infinite-type surface without boundary.  Recall that, $\Out(\Mod(S)) \cong \mathbb Z_2$. In fact,  the group is generated by the coset of the restriction of the inner automorphism $\hat{f}$ of $\Mod^{\pm}(S)$ induced by an orientation reversing homeomorphism $f$ of $S$ of order two (see \cite[Theorem 1.1]{MR4098634} and \cite{MR0970079}). We use the same notation $\hat{f}$ to denote its restriction on $\Mod(S)$.
\par 

The subsequent lemma provides a connection between $\hat{f}$-twisted conjugacy classes in $\Mod(S)$ and conjugacy classes of $\Mod^{\pm}(S)$ lying in the coset $\Mod(S)f$. See \cite[Lemma 2.1]{MR2639839} for a general result.

\begin{lemma}\label{lemma:twistedconj1}
Let $S$ be a connected orientable infinite-type surface without boundary and $f$ an orientation reversing homeomorphism of $S$. Then, two elements $g, h \in \Mod(S)$ are $\hat{f}$-twisted conjugate in $\Mod(S)$ if and only if $gf$ and $hf$ are conjugate in $\Mod^{\pm}(S)$.
\end{lemma}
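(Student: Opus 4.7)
The plan is to unpack the definitions and translate between the twisted action of $\Mod(S)$ on itself and the conjugation action of $\Mod^{\pm}(S)$ on the non-trivial coset $\Mod(S)f$, exploiting that $[\Mod^{\pm}(S):\Mod(S)]=2$ and that $f$ has order two (as noted in the paragraph preceding the lemma).

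First I would handle the forward direction. Suppose $h = k\,g\,\hat{f}(k)^{-1}$ for some $k \in \Mod(S)$. Substituting $\hat{f}(k)^{-1} = f k^{-1} f^{-1}$ and multiplying on the right by $f$ yields $hf = k(gf)k^{-1}$, exhibiting a conjugacy of $gf$ and $hf$ in $\Mod^{\pm}(S)$ (in fact by an element of $\Mod(S)$).

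For the converse, assume $hf = \ell(gf)\ell^{-1}$ with $\ell \in \Mod^{\pm}(S)$. Since $\Mod(S)$ has index two in $\Mod^{\pm}(S)$, either $\ell \in \Mod(S)$ or $\ell = m f$ for some $m \in \Mod(S)$. In the first case, multiplying on the right by $f^{-1}$ and rearranging gives $h = \ell\,g\,\hat{f}(\ell)^{-1}$, so $h \sim_{\hat{f}} g$ directly. In the second case, using $f^2 = 1$, I will compute
\begin{equation*}
\ell(gf)\ell^{-1} = (mf)(gf)(fm^{-1}) = m\,f\,g\,m^{-1},
\end{equation*}
which upon multiplying by $f^{-1}$ on the right gives $h = m\,f\,g\,m^{-1}\,f^{-1} = m\,\hat{f}(g)\,\hat{f}(m)^{-1}$, so $h \sim_{\hat{f}} \hat{f}(g)$.

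The remaining step, and the only mildly non-trivial point, is the observation that $g \sim_{\hat{f}} \hat{f}(g)$ holds for \emph{every} $g \in \Mod(S)$. I will exhibit the twisting explicitly: take $k := \hat{f}(g) \in \Mod(S)$; since $f^2 = 1$ implies $\hat{f}^2 = \id$, we have $\hat{f}(k) = \hat{f}^2(g) = g$, and therefore
\begin{equation*}
k\,g\,\hat{f}(k)^{-1} = \hat{f}(g)\cdot g \cdot g^{-1} = \hat{f}(g).
\end{equation*}
Combining this with the second case via transitivity of $\sim_{\hat{f}}$ yields $h \sim_{\hat{f}} g$, completing the converse. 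The only delicate point is the case $\ell \in \Mod(S) f$, but the identification $g \sim_{\hat{f}} \hat{f}(g)$ bridges the gap cleanly; the argument is otherwise a straightforward bookkeeping of cosets.
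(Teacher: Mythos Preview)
Your proof is correct and follows essentially the same route as the paper: the forward direction is immediate, and for the converse you split on whether the conjugator lies in $\Mod(S)$ or in $\Mod(S)f$, reducing the second case to the observation $g \sim_{\hat f} \hat f(g)$.

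One small point worth noting: the lemma as stated does not assume $f$ has order two, and the paper's proof does not use this. Where you take $k=\hat f(g)$ and invoke $\hat f^{2}=\id$, the paper instead takes $k=g^{-1}$, so that
\[
k\,g\,\hat f(k)^{-1}=g^{-1}g\,\hat f(g^{-1})^{-1}=\hat f(g),
\]
which establishes $g\sim_{\hat f}\hat f(g)$ without any hypothesis on the order of $f$. Your version is fine for the application (where $f$ is indeed an involution), but the paper's choice of conjugator matches the generality of the lemma statement.
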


\begin{proof}
Suppose that there exists $r \in \Mod(S)$ such that $h = r g \hat{f}(r)^{-1} = r g f r^{-1} f^{-1}$. But, this gives $hf = r (gf) r^{-1}$, that is, $gf$ and $hf$ are conjugate in $\Mod^{\pm}(S)$.
\par
Conversely, suppose that there exists $s \in \Mod^{\pm}(S)$	such that $hf = s (gf) s^{-1}$. If $s\in \Mod(S)$, then
$h = s g f s^{-1} f^{-1} = s g \hat{f}(s)^{-1}$, and we are done. If $s\not\in \Mod(S)$, then $s=t\, f$ for some $t \in\Mod(S)$. In this case, we have
$$h = t (f g f) f^{-1} t^{-1} f^{-1} = t (fgf^{-1}) \hat{f}(t)^{-1}.$$
Hence, $h$ and $(f g f^{-1})$ are $\hat{f}$-twisted conjugate in $\Mod(S)$. Since $(f g f^{-1})= g^{-1} g(f g f^{-1})= g^{-1} g \hat{f}(g^{-1})^{-1}$, the elements 
$(f g f^{-1})$ and $g$ are $\hat{f}$-twisted conjugate in $\Mod(S)$. Consequently, $h$ and $g$ are $\hat{f}$-twisted conjugate in $\Mod(S)$, and the proof is complete.
\end{proof}

\begin{lemma} \label{prop:multicurve}
Let $S$ be a connected orientable surface.
\begin{enumerate}
\item  Let $\left\{a_1, \ldots, a_l\right\}$ and  $\left\{b_1, \ldots, b_k\right\}$ be multicurves on $S$. Let $p_1, \ldots, p_l$ and $q_1, \ldots, q_k$ be non-zero integers. If
	$$
	T_{a_1}^{p_1} T_{a_2}^{p_2} \cdots T_{a_l}^{p_l}=T_{b_1}^{q_1} T_{b_2}^{q_2} \cdots T_{b_k}^{q_k}
	$$
	in $\Mod(S)$, then $l=k$ and $\left\{T_{a_1}^{p_1}, \ldots, T_{a_l}^{p_l}\right\}=\left\{T_{b_1}^{q_1}, \ldots, T_{b_k}^{q_k}\right\}$ \cite[Lemma 3.17]{MR2850125}.
\item Let $a$ and $b$ be essential simple closed curves on $S$. Then $T_a^k=T_b^l$ if and only if $a=b$ and $k=l$ \cite[Section 3.3]{MR2850125}.
\end{enumerate}
\end{lemma}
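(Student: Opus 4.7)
The plan is to prove part (2) first by an intersection-number argument, then bootstrap to part (1) by induction, using part (2) to match individual factors once a coincidence between the $a$-curves and $b$-curves has been forced.

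For part (2), suppose $T_a^k = T_b^l$. If either exponent vanishes then so does the other, since Dehn twists along essential curves have infinite order; so assume $k,l \neq 0$. Applying both sides to the isotopy class $b$ gives $T_a^k(b) = b$, and the standard intersection bound
$$
i(T_a^k(b), b) \,\ge\, |k|\, i(a,b)^2 - 2\, i(a,b)
$$
then forces $i(a,b) = 0$. By symmetry $i(b,a) = 0$, so $a$ and $b$ are disjoint representatives. If $a$ were not isotopic to $b$, I would exhibit a simple closed curve $c$ with $i(a,c) \neq 0$ and $i(b,c) = 0$; then $T_b^l(c) = c$ while $T_a^k(c) \neq c$, a contradiction. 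Hence $a = b$, and infinite order of $T_a$ gives $k = l$.

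For part (1), I would induct on the total weight $l + k$. Because the $a_i$ (resp.\ $b_j$) form a multicurve, the factors on each side commute, so their order is immaterial. The key step is to show that some $a_i$ coincides with some $b_j$. Suppose, for contradiction, that $a_1 \neq b_j$ for every $j$. I would then choose a simple closed curve $c$ that meets $a_1$ essentially but is disjoint from $\{b_1, \dots, b_k\} \cup \{a_2, \dots, a_l\}$. With such a $c$, the right-hand side fixes $c$, while the left-hand side sends $c$ to $T_{a_1}^{p_1}(c) \neq c$, producing a contradiction. Hence some $a_i = b_j$; applying part (2) to that pair gives $p_i = q_j$, so these factors cancel by commutativity and the induction continues on a strictly smaller identity.

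The main obstacle will be the construction of the auxiliary curve $c$ in the proof of part (1). On a general connected orientable surface (which here need not be of finite type), one must argue that cutting along the finite multicurve $\{b_1,\dots,b_k, a_2,\dots,a_l\}$, which does not contain $a_1$, still leaves $a_1$ essential in some component, and that a transverse arc through $a_1$ in that component can be closed up to a simple closed curve avoiding all the cut curves. I expect to reduce this to a local argument in a regular annular neighborhood of $a_1$: since $a_1$ is non-isotopic to every other curve in the list, its annular neighborhood meets the complement of the list in a non-trivial piece from which the desired $c$ can be extracted. The bookkeeping of \emph{essential} versus \emph{peripheral} will need some care, but no substantive new idea beyond the classical surface-topology toolkit should be required.
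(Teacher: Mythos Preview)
The paper does not prove this lemma: both parts are stated with explicit citations to Farb--Margalit's \emph{A Primer on Mapping Class Groups} (their Lemma~3.17 and Section~3.3) and no argument is given in the paper itself. So there is no in-paper proof to compare your proposal against; the lemma functions as a quoted background fact.

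Your sketch is in the spirit of the standard intersection-number argument from the cited source, but two points deserve tightening. In part~(2), the sharp identity $i\big(T_a^k(b),b\big)=|k|\,i(a,b)^2$ is what one actually uses; the weaker bound you wrote, $|k|\,i(a,b)^2-2\,i(a,b)$, is vacuous when $|k|\,i(a,b)\le 2$ and does not by itself force $i(a,b)=0$. In part~(1), requiring $c$ to avoid \emph{both} $\{b_1,\dots,b_k\}$ and $\{a_2,\dots,a_l\}$ is stronger than needed and awkward to arrange, since those two families may intersect one another and your cut-open picture is then ill-defined. The cleaner route (and the one in Farb--Margalit) is to first prove that a multitwist along a multicurve $A$ fixes a curve $c$ if and only if $c$ is disjoint from every element of $A$; equality of the two multitwists then forces the disjointness sets of $\{a_i\}$ and $\{b_j\}$ to coincide. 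In particular each $a_i$ is disjoint from every $b_j$, so $\{a_i\}\cup\{b_j\}$ is itself a multicurve, and you only need a curve $c$ meeting $a_1$ but missing the $b_j$'s. That reduces to the elementary fact that $a_1$ remains essential and non-peripheral in the component of $S$ cut along $\{b_j\}$, which disposes of the obstacle you identified.
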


\begin{lemma} \cite[Corollary 3.2]{MR2399650} \label{lem:autinn}
Let $G$ be a group, $\phi \in \Aut(G)$ and $g \in G$. Then $R(\phi \, \hat{g}) = R(\phi)$. In particular, $R(\hat{g})$ is the number of conjugacy classes in $G$.
\end{lemma}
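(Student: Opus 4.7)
The plan is to construct an explicit set-theoretic bijection between the $\phi$-twisted conjugacy classes of $G$ and the $\phi\hat{g}$-twisted conjugacy classes of $G$, from which the equality $R(\phi\hat{g})=R(\phi)$ follows immediately. The candidate bijection is right translation $\Psi\colon G\to G$, $x\mapsto x\phi(g)^{-1}$, which is visibly a bijection of the underlying set; everything then reduces to checking that $\Psi$ intertwines the two twisted conjugacy relations.

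The key identity I would exploit is
\[
(\phi\hat{g})(k)=\phi(gkg^{-1})=\phi(g)\phi(k)\phi(g)^{-1}\qquad \text{for all } k\in G.
\]
Using this, suppose $y\sim_\phi x$, so that $y=kx\phi(k)^{-1}$ for some $k\in G$. Multiplying on the right by $\phi(g)^{-1}$ and inserting $\phi(g)^{-1}\phi(g)$ between $\phi(k)^{-1}$ and $\phi(g)^{-1}$, I get
\[
y\phi(g)^{-1}=kx\phi(k)^{-1}\phi(g)^{-1}=k\bigl(x\phi(g)^{-1}\bigr)\bigl(\phi(g)\phi(k)\phi(g)^{-1}\bigr)^{-1}=k\,\Psi(x)\,(\phi\hat{g})(k)^{-1},
\]
which says precisely $\Psi(y)\sim_{\phi\hat{g}}\Psi(x)$. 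Reading the same computation backwards, any witness $k$ to $\Psi(y)\sim_{\phi\hat{g}}\Psi(x)$ is also a witness to $y\sim_\phi x$. Hence $\Psi$ sends $\phi$-twisted classes bijectively onto $\phi\hat{g}$-twisted classes, proving $R(\phi\hat{g})=R(\phi)$.

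For the "in particular" statement, specialise to $\phi=\mathrm{id}$: the result just proved gives $R(\hat{g})=R(\mathrm{id})$, and the $\mathrm{id}$-twisted class of $x$ is $\{kxk^{-1}\mid k\in G\}$, which is the ordinary conjugacy class of $x$. Hence $R(\hat{g})$ equals the total number of conjugacy classes in $G$. I do not expect any real obstacle here: the entire proof is a single-line conjugation manipulation, and the only mildly non-obvious point is guessing the correct shift (right multiplication by $\phi(g)^{-1}$ rather than by $g$, $g^{-1}$, or $\phi(g)$); once that is fixed, the verification is automatic.
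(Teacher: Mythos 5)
Your argument is correct: right translation by $\phi(g)^{-1}$ does intertwine $\sim_\phi$ with $\sim_{\phi\hat{g}}$ with the same witness $k$, since $(\phi\hat{g})(k)^{-1}=\phi(g)\phi(k)^{-1}\phi(g)^{-1}$, and the ``in particular'' clause follows by taking $\phi=\mathrm{id}$. The paper gives no proof of this lemma (it is quoted from the cited reference), but your translation bijection is the standard argument and is essentially the same computation the authors carry out in the proof of Proposition \ref{lem:twistrokh}, where $C_{\phi}(x)=R_g\bigl(C_{\hat{g}\phi}(xg^{-1})\bigr)$; note that $\phi\hat{g}=\widehat{\phi(g)}\,\phi$, so your identity is that one with $g$ replaced by $\phi(g)$.
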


\begin{proposition} \label{prop:order2}
Let  $S$ be a connected orientable infinite-type surface without boundary such that either $g \geq 2$ (can be infinite) or every compact subsurface of $S$ is displaceable. Then there exists an essential simple closed curve $c$ and an orientation reversing homeomorphism $f$ of $S$ of order two such that $\{c, f(c)\}$ forms a multicurve.
\end{proposition}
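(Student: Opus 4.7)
The natural candidate for $f$ is the orientation-reversing involution $\m$ reflecting $S$ across the $xy$-plane, coming from the symmetric embedding. The core strategy is to place $c$ entirely in the open upper half $S\cap\{z>0\}$, which automatically forces $\m(c)\subset S\cap\{z<0\}$ and hence $c\cap\m(c)=\emptyset$. With disjointness free, the real task in each case is to ensure $c$ is essential in $S$ and not isotopic to $\m(c)$.

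In the case $g\ge 2$ (possibly infinite), I would choose the symmetric embedding so that at least one pair $\{D_i,D_j\}$ of identification disks lies entirely in the open upper hemisphere of $\mathbb{S}^2$. This is possible because $g\ge 2$ leaves room for at least two handles, and a symmetric embedding only forces each pair above the equator to be matched by a mirror pair below, rather than straddling the equator. The identification then yields a handle $H^+\subset S\cap\{z>0\}$ with mirror $H^-=\m(H^+)\subset S\cap\{z<0\}$. Take $c$ to be a meridian of $H^+$; it is non-separating in $S$ and hence essential. The curve $\m(c)$ is a meridian of the disjoint handle $H^-$, so disjointness is automatic. An isotopy between $c$ and $\m(c)$ would produce an embedded annulus cobounded by them, forcing $[c]=\pm[\m(c)]$ in $H_1(S;\mathbb{Z})$; but these classes are meridians of distinct handles and hence linearly independent in homology, so no such isotopy can exist.

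In the case when every compact subsurface of $S$ is displaceable and $g<2$, I would first rule out $g=1$: any compact subsurface $\Sigma$ containing the unique handle has genus one, and a displacing homeomorphism $h\in\Homeo^{+}(S)$ would yield a second disjoint genus-one subsurface $h(\Sigma)$, forcing $g\ge 2$, a contradiction. Hence $g=0$ and $S$ is planar with $\mathcal{E}(S)$ infinite. Here I would depart from the paper's default convention of placing all ends on the equator, and instead choose a symmetric embedding in which a clopen subset $E^+\subset\mathcal{E}(S)$ of cardinality at least three sits strictly inside the open upper hemisphere, with its mirror $\m(E^+)$ strictly inside the open lower hemisphere, while any remaining ends are placed anywhere. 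Such an embedding exists by the classification of surfaces (only the homeomorphism type of $\mathcal{E}(S)$ is an invariant in genus zero) together with the self-similarity, or the two-copy structure when $|\mathcal{M}(S)|=2$, supplied by Theorem \ref{compact subsurface is displaceable}. Then let $c\subset S\cap\{z>0\}$ be a simple closed curve on $\mathbb{S}^2$ separating $E^+$ from $\mathcal{E}(S)\setminus E^+$. Both sides of $c$ in $S$ carry at least three ends (the other side contains $\m(E^+)$), so $c$ is essential; $\m(c)\subset S\cap\{z<0\}$ is disjoint from $c$; and since $c$ and $\m(c)$ separate the disjoint clopen sets $E^+$ and $\m(E^+)$ of ends, they cannot cobound an annulus, which establishes non-isotopy.

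The chief technical obstacle is producing the modified embedding in the planar case: constructing a symmetric realization of $\mathcal{E}(S)$ in $\mathbb{S}^2$ with a non-trivial clopen subset strictly off the equator, while respecting the constraints imposed by Theorem \ref{compact subsurface is displaceable}. Once such an embedding is secured, all remaining verifications (essentiality, disjointness, and non-isotopy) reduce to elementary arguments about how curves separate ends or distinguish handles.
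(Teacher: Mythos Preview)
Your overall strategy differs from the paper's: rather than switching to the $xz$-reflection (or a custom involution swapping two maximal ends) in the planar subcases, you try to keep the single reflection $\m$ about the $xy$-plane and instead re-embed $S$ so that enough topology sits strictly off the equator. The genus $g\ge 2$ case is handled correctly; placing a handle--mirror-handle pair off the equator and taking a meridian is a clean way to get disjoint non-isotopic curves, and your homology argument for non-isotopy is fine.

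The planar case, however, has two genuine gaps. First, the existence of the modified symmetric embedding amounts to exhibiting an involution $\iota$ of $\mathcal{E}(S)$ together with a clopen $E^+$ of size at least two satisfying $\iota(E^+)\cap E^+=\varnothing$. Your appeal to ``self-similarity'' does not produce such an involution: self-similarity is an asymmetric statement (one piece of a clopen decomposition contains a copy of the whole), and extracting a global order-two symmetry from it requires an additional argument (e.g.\ finding two disjoint clopen sets with homeomorphic neighbourhood germs and swapping them while fixing the rest). The paper sidesteps this entirely by changing the reflection: in the unique-maximal-end case it uses the pair-of-pants picture from Proposition~\ref{prop:uniquemax} together with the $xz$-reflection, in the two-maximal-ends case it quotes an involution swapping the two ends, and in the Cantor case it reuses the six-fold symmetric subsurface from Theorem~\ref{thm:cantor}. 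Second, even granting the embedding, your non-isotopy argument is incomplete: if it happens that $\mathcal{E}(S)=E^+\sqcup\m(E^+)$, then $c$ and $\m(c)$ induce the \emph{same} partition of ends and, on a planar surface, are therefore isotopic. You must explicitly arrange that $E^+\cup\m(E^+)\subsetneq\mathcal{E}(S)$ (for instance by taking $E^+$ to be a proper clopen subset of one ``half''), and the phrase ``remaining ends are placed anywhere'' should read ``placed symmetrically'', since otherwise $\m$ is not an involution of $S$.
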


\begin{proof}
We have the following two cases:
\par
Case 1: Suppose that $g \geq 2$ (including infinite genus). We assume that the surface $S$ is symmetric about the $xy$-plane when embedded in $\mathbb{R}^3$. 
Then the orientation reversing homeomorphism $\m$ and the curve $c$ shown in Figure~\ref{fig:np1} have the property that $\{c, \m(c)\}$ forms a multicurve.
	\begin{figure}[H]
		\labellist
		\tiny
		\pinlabel $c$ at 300, 80
		\pinlabel $x$ at 380, 65
		\pinlabel $y$ at 195, 140
		\endlabellist
		\centering
		\includegraphics[scale=0.57]{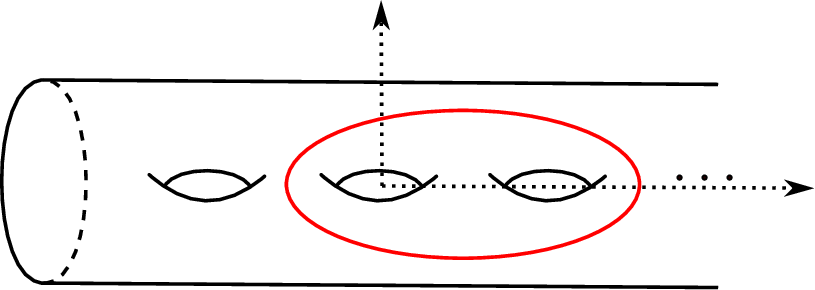}
		\caption{The curve $c$ such that $\{c, \m(c)\}$ forms a multicurve.}
		\label{fig:np1}
	\end{figure}
\par
Case 2: Suppose that every compact subsurface of $S$ is displaceable. In this case,  the genus of $S$ is either zero or infinite. The latter possibility is already considered in Case 1. So, we assume that the genus is zero, that is, $S$ is planar.  In view of Theorem \ref{compact subsurface is displaceable}, we have three possibilities. If $S$ has a unique maximal end, then by the proof of Proposition~\ref{prop:uniquemax}, there exist curves $\{c, c^{\prime}, b\}$ which bound a pair of pants and $\{c,c'\} \subset \Sigma_b$ as shown in Figure \ref{fig:unique}. Now, we take $f$ to be the orientation reversing homeomorphism of $S$ which maps a point on $S$ to its mirror image about the $xz$-plane. Then the collection $\{ c, f(c)=c' \}$ forms a multicurve.
	\begin{figure}[H]
		\labellist
		\tiny
		\pinlabel $x$ at 365, 120
		\pinlabel $c'$ at 67, 170
		\pinlabel $c$ at 70, 65
		\pinlabel $b$ at 185, 140
		\pinlabel $y$ at 125, 290
		\pinlabel $\Omega_b$ at 235, 180
		\pinlabel $\Sigma_{c}$ at 50, 0
		\pinlabel $\Sigma_{c'}$ at 50, 230
		\endlabellist
		\centering
		\includegraphics[scale=0.57]{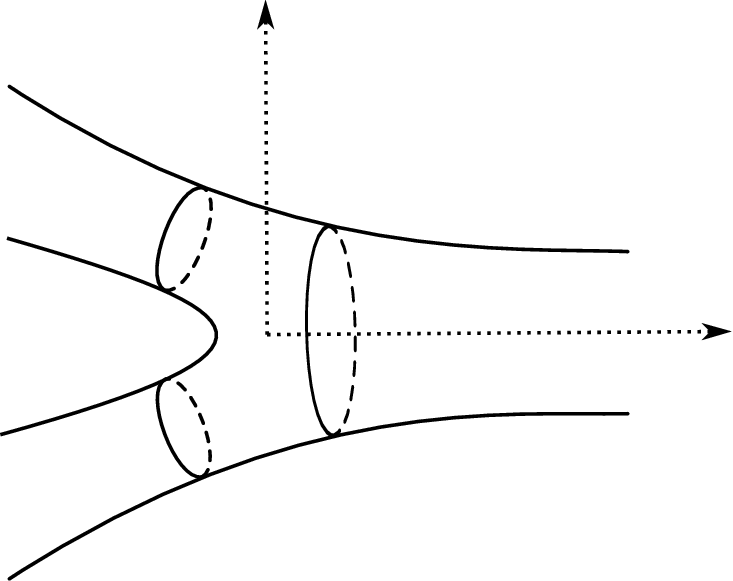}
		\caption{The curve $c$ such that $\{c, c'\}$ forms a multicurve.}
		\label{fig:unique}
	\end{figure}
If $S$ has two maximal ends, then by \cite[Lemma 5.4]{MR4492497}, we can realise our surface $S$ such that there exists an orientation reversing homeomorphism $f$ of $S$ of order two, which permutes the two maximal ends. If $c$ is a curve in a neighbourhood of one of the maximal ends, then $\{ c, f(c) \}$ forms a multicurve.
\par 

If $S$ is such that the space $\mathcal{M}(S)$ of maximal ends is homeomorphic to the cantor space, then we take $f$ to be the orientation reversing homeomorphism of $S$ which maps a point of $S$ to its mirror image about the $xz$-plane (see Figure~\ref{fig:curve}). In this case,  $\{b_1, f(b_1)=b_5 \}$ forms a multicurve.
\end{proof}

\begin{proposition}\label{prop:inftyconj}
Let $S$ be a connected orientable infinite-type surface without boundary. Let $f$ be an orientation reversing homeomorphism of $S$ of order two such that there exists an essential simple closed curve $c$ for which $\{c, f(c)\}$ is a multicurve. Then the right coset $\Mod(S)f$ contains infinitely many conjugacy classes of the group $\Mod^{\pm}(S)$.
\end{proposition}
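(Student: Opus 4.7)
The plan is to exhibit an infinite family of pairwise non-conjugate elements in the coset $\Mod(S)f$ using powers of the Dehn twist along $c$. Specifically, the candidate family is $\{T_c^n f : n \in \mathbb N\}$, chosen because the hypothesis that $\{c, f(c)\}$ is a multicurve ensures that $T_c$ and $T_{f(c)}$ commute and are algebraically independent in a very rigid sense controlled by Lemma \ref{prop:multicurve}.

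The key computation is the square. Since $f$ has order two in $\Mod^{\pm}(S)$ and is orientation-reversing, Lemma \ref{lem:twistprop1} yields $f T_c f^{-1} = T_{f(c)}^{-1}$, and hence
\[
(T_c^n f)^2 = T_c^n (f T_c^n f^{-1}) f^2 = T_c^n T_{f(c)}^{-n}.
\]
Now suppose, towards a contradiction, that $T_c^n f$ and $T_c^m f$ are conjugate in $\Mod^{\pm}(S)$ for some $n \neq m$ in $\mathbb N$, via an element $h$. Then their squares are also conjugate, giving
\[
h \, T_c^n T_{f(c)}^{-n} \, h^{-1} \;=\; T_c^m T_{f(c)}^{-m}.
\]
Applying Lemma \ref{lem:twistprop1} to the left-hand side turns it into $T_{h(c)}^{\varepsilon n} T_{h(f(c))}^{-\varepsilon n}$, where $\varepsilon = +1$ or $-1$ according as $h$ is orientation-preserving or orientation-reversing.

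Both sides are products of Dehn twists along genuine multicurves: on the right by hypothesis, and on the left because the image of a multicurve under a homeomorphism is again a multicurve. Thus Lemma \ref{prop:multicurve}(1) applies and forces equality of the multisets $\{T_{h(c)}^{\varepsilon n},\, T_{h(f(c))}^{-\varepsilon n}\} = \{T_c^m,\, T_{f(c)}^{-m}\}$. A short case analysis---matching $T_{h(c)}^{\varepsilon n}$ with either $T_c^m$ or $T_{f(c)}^{-m}$, and invoking Lemma \ref{prop:multicurve}(2) to identify the underlying curves and exponents in each match---reduces in all four combinations of $\varepsilon \in \{\pm 1\}$ and the two possible matchings to the single conclusion $|n| = |m|$. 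Since $n, m \geq 1$, this contradicts $n \neq m$.

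The step most in need of care is confirming the hypotheses of Lemma \ref{prop:multicurve}(1): the exponents $\pm n$, $\pm m$ are nonzero because $n, m \geq 1$, and each of the pairs $\{c, f(c)\}$ and $\{h(c), h(f(c))\}$ is a multicurve with two distinct isotopy classes. Everything else is a routine bookkeeping of signs and curve labels. This yields infinitely many pairwise distinct $\Mod^{\pm}(S)$-conjugacy classes in the coset $\Mod(S) f$, as required.
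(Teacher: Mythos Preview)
Your proof is correct and follows essentially the same approach as the paper: exhibit the family $\{T_c^n f\}_{n\ge 1}$, pass to squares to obtain multitwists $T_c^n T_{f(c)}^{-n}$, conjugate, and invoke Lemma~\ref{prop:multicurve} to force $m=n$. Your write-up is in fact slightly more careful than the paper's, in that you explicitly verify the hypotheses of Lemma~\ref{prop:multicurve}(1) and spell out the case analysis on $\varepsilon$ and the two matchings.
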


\begin{proof}
We claim that, for each $n \in \mathbb N$, the element  $T_c^n f$ represents a distinct conjugacy classes in $\Mod^{\pm}(S)$. We set $d:=f(c)$. Suppose that $T_c^n f$ is conjugate to $T_c^m f$ in $\Mod^{\pm}(S)$ for some $m,n \ge 1$. This implies that $(T_c^n f)^2 = T_c^n T_{d}^{-n}$ is conjugate to $(T_c^m f)^2= T_c^m T_{d}^{-m}$ in $\Mod^{\pm}(S)$. Thus, there exists $g \in \Mod^{\pm}(S)$ such that
$$T_c^{n}T_d^{-n}  = gT_c^{m} T_d^{-m}g^{-1}	 = T_{g(c)}^{\epsilon m} T_{g(d)}^{- \epsilon m},$$
	where $$\epsilon = \begin{cases}
	1 & \text{ if } g \text{ is orientation preserving,}\\
	-1 & \text{ if } g \text{ is orientation reversing.}
	\end{cases}$$
	By Lemma~\ref{prop:multicurve}, this is possible only when $m = n$.
\end{proof}

\begin{theorem}\label{theorem r-infinity property}
Let  $S$ be a connected orientable infinite-type surface without boundary. Then $\Mod(S)$ has the $R_\infty$-property.
\end{theorem}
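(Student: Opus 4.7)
The plan is to split on whether $S$ admits a compact non-displaceable subsurface. If such a subsurface exists, I would directly invoke the partial result \cite[Theorem 3.4]{MR4525669} cited in the introduction, which already yields the $R_{\infty}$-property in that case. So the substantive work is to handle surfaces in which every compact subsurface is displaceable, and this is precisely the setting for which the preceding lemmas and propositions of this section have been crafted.

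Assume every compact subsurface of $S$ is displaceable and let $\phi \in \Aut(\Mod(S))$. Since $\Out(\Mod(S)) \cong \mathbb{Z}_2$ by Proposition \ref{outer auto mcg} and is generated by the class of $\hat{f}_0$ for any order-two orientation-reversing homeomorphism $f_0$, the automorphism $\phi$ is either inner or differs from such an $\hat{f}_0$ by an inner automorphism. By Lemma \ref{lem:autinn}, the value of $R(\phi)$ depends only on the outer class of $\phi$, so it suffices to show both $R(\id) = \infty$ and $R(\hat{f}_0) = \infty$. For $R(\id)$, which equals the total number of conjugacy classes of $\Mod(S)$, I would observe that the powers $\{T_c^n\}_{n \in \mathbb{N}}$ of a Dehn twist along an essential simple closed curve $c$ are pairwise non-conjugate in $\Mod(S)$: conjugating by $h \in \Mod(S)$ sends $T_c^n$ to $T_{h(c)}^n$ via Lemma \ref{lem:twistprop1} (since $h$ is orientation-preserving), and an equality $T_{h(c)}^n = T_c^m$ forces $n=m$ by Lemma \ref{prop:multicurve}(2).

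The heart of the argument is establishing $R(\hat{f}_0) = \infty$ for a convenient choice of $f_0$. I would apply Proposition \ref{prop:order2}, admissible precisely because every compact subsurface of $S$ is displaceable, to produce an order-two orientation-reversing homeomorphism $f$ together with an essential simple closed curve $c$ for which $\{c, f(c)\}$ is a multicurve; by the previous paragraph, replacing the original $f_0$ by this $f$ does not alter the Reidemeister number. Lemma \ref{lemma:twistedconj1} then identifies the $\hat{f}$-twisted conjugacy classes in $\Mod(S)$ with the $\Mod^{\pm}(S)$-conjugacy classes of elements in the coset $\Mod(S)f$, and Proposition \ref{prop:inftyconj} exhibits an infinite family $\{T_c^n f\}_{n \in \mathbb{N}}$ of pairwise non-conjugate representatives in that coset. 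The main bookkeeping obstacle is the shuffling between $\Mod(S)$- and $\Mod^{\pm}(S)$-conjugacy, which Lemma \ref{lemma:twistedconj1} is designed to mediate; once that correspondence is in hand, Proposition \ref{prop:inftyconj} closes the case and completes the proof.
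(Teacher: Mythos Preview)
Your proposal is correct and follows essentially the same route as the paper: reduce via Lemma~\ref{lem:autinn} to the identity and a single outer representative, handle the inner case by distinguishing powers of a Dehn twist using Lemmas~\ref{lem:twistprop1} and~\ref{prop:multicurve}, and for the outer case combine Proposition~\ref{prop:order2}, Proposition~\ref{prop:inftyconj}, and Lemma~\ref{lemma:twistedconj1}. The only cosmetic difference is the case split---you invoke \cite[Theorem~3.4]{MR4525669} for every surface admitting a non-displaceable compact subsurface, whereas the paper cites it only when additionally the genus is at most one and uses its own machinery (Proposition~\ref{prop:order2} also applies whenever $g\ge 2$) for the remaining cases.
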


\begin{proof}
It follows from Lemma~\ref{lem:twistprop1} and Lemma \ref{prop:multicurve} that if $c$ is an essential simple closed curve on $S$, then $T_c^m$ is conjugate to $T_c^n$ in $\Mod(S)$ if and only if $m = n$. Thus, $\Mod(S)$ has infinitely many conjugacy classes. If $S$ admits a non-displaceable compact subsurface of genus $0$ or $1$, then the theorem follows from \cite[Theorem 3.4]{MR4525669}. So, we assume that either every compact subsurface of $S$ is displaceable or $S$ has genus at least two.   Since $\Out(\Mod(S)) \cong \mathbb{Z}_2$, in view of Lemma~\ref{lem:autinn}, it is enough to show that $R(\hat{f})$ is infinite for an orientation reversing homeomorphism $f$ of $S$ of order two. By Proposition~\ref{prop:order2}, we can take $f$ to be such that there exists an essential simple closed curve $c$ for which $\{c, f(c)\}$ forms a multicurve. By Proposition \ref{prop:inftyconj}, $\Mod(S)f$ contains infinitely many conjugacy classes of the group $\Mod^{\pm}(S)$. Finally, Lemma \ref{lemma:twistedconj1} implies that  $R(\hat{f})$ is infinite, which completes the proof.
\end{proof}

\section{Further directions}
We conclude with the following questions that arose naturally during our investigation.
\begin{enumerate}
\item Our findings indicate that when $S$ is an infinite-type surface without boundary, the mapping class group $\Mod(S)$ either exhibits the $\phi$-twisted Rokhlin property for all automorphisms $\phi$, or it lacks the $\phi$-twisted Rokhlin property for each automorphism $\phi$. We do not know an example of a topological group $G$ which admits a pair of automorphisms $\phi,\psi$  such that  $G$  has the $\phi$-twisted Rokhlin property, but not the $\psi$-twisted Rokhlin property? In general, it would be interesting to find such examples in topological groups that admit `sufficiently large' outer automorphim groups.
\item Suppose that $H$ is a (closed) subgroup of a topological group $G$ such that each automorphism of $H$ is the restriction of an inner automorphism of $G$. Is it true that $H$ has the Rokhlin property if and only if it has the twisted Rokhlin property?
\item Which other interesting topological groups admit the twisted Rokhlin property?
\end{enumerate}
\medskip

\begin{ack}
Pravin Kumar is supported by the PMRF fellowship at IISER Mohali. Mahender Singh is supported by the Swarna Jayanti Fellowship grants DST/SJF/MSA-02/2018-19 and SB/SJF/2019-20/04.
Apeksha Sanghi acknowledges post-doctoral fellowship from the grant SB/SJF/2019-20/04. 
\end{ack}

\section{Declaration}
The authors declare that they have no conflicts of interest and that there is no data associated with this paper.

\bibliographystyle{plain}

\end{document}